\newtheorem{Theorem}{Theorem}[section]
\newtheorem{Lemma}[Theorem]{Lemma}
\newtheorem{Corollary}[Theorem]{Corollary}
\newtheorem{Proposition}[Theorem]{Proposition}
\theoremstyle{definition}
\newtheorem{Definition}[Theorem]{Definition}
\newtheorem{Remark}[Theorem]{Remark}
\newtheorem{Example}[Theorem]{Example}
\newcommand\ie{i.e.\ }
\newcommand\cf{cf.\ }
\renewcommand\o{\circ}
\newcommand \al{\alpha}
\newcommand\ze{\zeta}
\newcommand\et{\eta}
\renewcommand\th{\theta}
\newcommand\ka{\kappa}
\newcommand\la{\lambda}
\newcommand\si{\sigma}
\newcommand\ta{\tau}
\newcommand\ph{\varphi}
\newcommand\ps{\psi}
\newcommand\om{\omega}
\newcommand\Ga{\Gamma}
\newcommand\Th{\Theta}
\newcommand\Om{\Omega}
\newcommand{\half}{{\textstyle \frac{1}{2}}}
\newcommand\ham{\text {\rm ham}}
\newcommand\Diff{\text{\rm Diff}}
\newcommand\OGr{\text{\rm Gr}}
\newcommand\Gr{\text{\rm Gr}}
\newcommand\Ham{\text{\rm Ham}}
\newcommand\ex{\text{\rm ex}}
\newcommand\Quant{\text{\rm Diff}}
\newcommand\Flux{\text{\rm Flux}}
\newcommand{\oo}{\infty}
\newcommand\g{\mathfrak g}
\newcommand\n{\mathfrak n}
\newcommand\M{\mathcal{M}}
\newcommand\cP{\mathcal{P}}
\newcommand\X{\mathfrak X}
\newcommand{\pa}{\partial}
\newcommand\R{\mathbb R}
\newcommand\Z{\mathbb Z}
\newcommand\N{\mathbb N}
\newcommand\T{\mathbb T}
\begin{document}

\title
{Integrability of central extensions of the Poisson Lie algebra via prequantization} 

\author{Bas Janssens and Cornelia Vizman}

\date{May 11 2016}

\maketitle

\begin{abstract}
We present a geometric construction of central $S^1$-extensions of
the quantomorphism group of a prequantizable, compact, symplectic manifold,
{and explicitly describe}
the corresponding lattice of integrable cocycles on the Poisson Lie algebra. 
{We use this to find nontrivial central $S^1$-extensions of 
the universal cover of the group of Hamiltonian diffeomorphisms.
In the process, we obtain central $S^1$-extensions of 
Lie groups that act by exact strict contact transformations.}
\end{abstract}


\section{Introduction}

Central Lie group extensions can be obtained
by pullback of the prequantization central extension. 
The ingredients are a connected Lie group $G$ with Lie algebra $\g$, 
a connected prequantizable symplectic manifold 
$(\M,\Om)$, and
a Hamiltonian action of $G$ on $\M$.

As $\M$ is prequantizable, it has a prequantum 
$S^1$-bundle $\cP \rightarrow \M$ with connection 1-form $\Theta$,
giving rise to the \emph{quantomorphism group} 
$\Quant(\cP,\Theta)$ of connection-preserving
automorphisms of this bundle. 
Since its identity component $\Quant(\cP,\Theta)_0$ 
is a central $S^1$-extension of the Hamiltonian diffeomorphism group 
$\Ham(\M,\Omega)$,
its pullback by the Hamiltonian action $G \rightarrow \Ham(\M,\Omega)$ 
yields a central $S^1$-extension $\widehat{G}$
of $G$,
%
\begin{equation}\label{pbdiag}
\xymatrix 
{
\widehat{G} \ar[r] \ar[d]&  
\Quant(\cP,\Theta)_{0}\ar[d] 
\\
G \ar[r] &  
\Ham(\M,\Omega)\,. 
}
\end{equation}
%
If the manifold $\M$ and the Lie group $G$ are infinite dimensional, 
then this construction remains valid; the pullback $\widehat{G}$ is still a  Lie 
group, even though this may not be the case for $\Ham(\M,\Omega)$ and 
$\Quant(\cP,\Theta)$ 
(cf.\ \cite{NV03}).

We apply this construction 
in the following setting.
Suppose that $\pi:P \rightarrow M$ is a prequantum
$S^1$-bundle over a compact, symplectic manifold $(M,\omega)$ of dimension $2n$, and that $\theta$ is a connection 1-form with curvature $\omega$.
The identity component 
$G =\Quant(P,\th)_0$
of the {quantomorphism group} is then a Fr\'echet Lie group, 
with Lie algebra $\g$ isomorphic to the Poisson Lie algebra $C^{\infty}(M)$.
The infinite dimensional symplectic manifold $\M$ on which $G$ acts will be 
a connected component of the 
\emph{nonlinear Grassmannian} $\mathrm{Gr}_{2n-1}(P)$
of codimension two closed, oriented,
embedded submanifolds of $P$. 
This Fr\'echet manifold
is prequantizable by \cite{I96,HV04}, and the natural action of $G$ on $\M$ is Hamiltonian.
In this way, we obtain central $S^1$-extensions $\widehat{G} \rightarrow G$
of the identity component $G = \Quant(P,\th)_0$ of the quantomorphism group.

For any (Fr\'echet) Lie group $G$, the central extensions of
$G$ by $S^1$ play 
a pivotal role in the theory of \emph{projective unitary $G$-representations}. 
Every such 
representation 
gives rise to a central $S^1$-extension $\widehat{G}$, together with a \emph{linear}
unitary $\widehat{G}$-representation
\cite{PS86, TL99, JN15}.
Passing to the infinitesimal level, one obtains information on the 
projective $G$-representations from the 
(often more accessible) linear representation theory
of the corresponding central Lie algebra extensions $\widehat{\g}$.

In the passage to the infinitesimal level, however, one important piece of information is lost: 
not every Lie algebra extension $\widehat{\g} \rightarrow \g$
integrates to a group extension $\widehat{G} \rightarrow G$.
The ones that do, determine a lattice $\Lambda \subseteq H^2(\g,\R)$
in the continuous second Lie algebra cohomology of $\g$, 
called the \emph{lattice of integrable classes}.

In the context of quantomorphism groups, 
the continuous second Lie algebra cohomology 
of the Poisson Lie algebra $C^{\infty}(M)$
has been explicitly determined: in \cite{JV}, we proved that 
\[
	H^2(C^\oo(M),\R) \simeq H^1(M,\R)\,.
\]
To the best of our knowledge, it remains an open problem to determine the 
full lattice $\Lambda \subseteq H^1(M,\R)$ of integrable classes; it appears 
that  
the period homomorphism 
governing integrability (cf.\ \cite[Thm.\ 7.9]{N02}) is not easy to calculate 
in the setting of quantomorphism groups.

In the present paper, we contribute towards a solution by 
explicitly determining
the sublattice $\Lambda_0 \subseteq \Lambda$
corresponding to the group extensions $\widehat{G} \rightarrow G$
described above.
We find that 
\[
	\Lambda_0 = \frac{n+1}{2\pi \mathrm{vol}(M)}\pi_! (H^2(P,\R)_{\Z})\,,
\]
where $H^2(P,\R)_{\Z}$ is the lattice of integral classes in de Rham cohomology,
and $\pi_{!}$ denotes  fiber integration.
This formula is easily evaluated in concrete situations.
If $M$ is a compact surface,  then 
$\Lambda_0 \subseteq H^1(M,\R)$ is of full rank.
On the other extreme, we find that $\Lambda_0 = \{0\}$ 
if $M$ is a compact K\"ahler manifold of dimension $2n \geq 4$.
Intermediate behavior is displayed by \emph{nilmanifolds}. 
Thurston's nilmanifold $M^4$, for example, affords a lattice $\Lambda_0$
that is of rank $1$ in the 3-dimensional vector space
$H^1(M^4,\R)$.

%


We expect that in the representation theory of the Poisson Lie algebra $C^{\infty}(M)$,
the lattice $\Lambda$  
of integrable classes will play the same role as the 
\emph{integral level condition} in the
representation theory of loop algebras and affine Kac-Moody 
algebras \cite[\S 12]{Ka90}. From a differential geometric point of view, 
integrality of the level
for a loop algebra $\g$ (possibly twisted, 
over a simple Lie algebra) is precisely the condition 
that the induced class in $H^2(\g,\R) \simeq H^1(S^1,\R)$
corresponds to a Lie group extension,
cf.\ \cite[\S 4]{PS86}.

\section{Prequantization central extension}

Let $G$ be a connected Lie group with Lie algebra $\g$,  and let 
$(\M, \Omega)$ be a connected 
prequantizable symplectic manifold with a Hamiltonian $G$-action.
Both $G$ and $\M$ are allowed to be 
infinite dimensional manifolds, modeled on locally convex spaces.
Let $\pi\colon \cP\to \M$ be a {\it prequantum bundle},
\ie a principal $S^1$-bundle with principal connection 
1-form $\Th$ and curvature $\Om$. In particular,
the identity $\pi^*\Om=d\Th$ holds.

\subsection{The Kostant-Souriau extension}

The prequantum bundle $\cP \rightarrow \M$ gives rise to the
{\it prequantization central extension} \cite{Ko70, S70} 
\begin{equation}\label{kostant}
S^1\to\Quant(\cP,\Th)_0\to\Ham(\M,\Om),
\end{equation}
where $\Ham(\M,\Om)$ is the group of Hamiltonian diffeomorphisms, and 
$\Quant (\cP,\Th)_0$ 
is the identity component of the \emph{quantomorphism group}
\[
	\Quant(\cP,\Th)=\{\ph\in\Diff(\cP):\ph^*\Th=\Th\}\,.
\]
Note that $\ph^*\Th = \Th$ implies $\ph_*E = E$, where $E \in \X(P)$ is the infinitesimal generator 
of the $S^1$-action. In particular, every quantomorphism is a bundle automorphism.

The infinitesimal counterpart of \eqref{kostant} is the
central
extension
\begin{equation}\label{extquant}
\R \rightarrow \X(\cP,\Th) \rightarrow \X_{\ham}(\M,\Omega)
\end{equation}
of the Lie algebra of \emph{Hamiltonian} vector fields
\[
	\X_{\ham}(\M,\Omega) := \{ X_{f} \in \X(\M) : i_{X_{f}}\Omega = -df \},
\]
namely the \emph{quantomorphism Lie algebra}
\[
\X(\cP,\Th) := \{X\in\X(\cP) \colon L_X\Th=0\}\,.
\]
It is isomorphic to the Poisson Lie algebra $C^{\infty}(\M)$
via the Lie algebra isomorphism
\begin{equation}\label{iso}
\zeta \colon C^{\infty}(\M) \rightarrow \X(\cP,\Th),
\quad\zeta_{f}:=X_{f}^{\rm hor}+(\pi^*f)E,
\end{equation}
where $Y^{\rm hor}$ denotes the horizontal lift of the vector field $Y\in\X(\M)$.
The central extension \eqref{extquant} can thus be identified with the \emph{Kostant-Souriau} extension
\begin{equation}\label{poissonrijtje}
\R \rightarrow C^{\infty}(\M) \rightarrow \X_{\ham}(\M,\Omega)\,,
\end{equation}
induced by the map $f \mapsto X_{f}$.

The central extensions of a locally convex Lie algebra $\g$ are classified by 
its \emph{continuous second Lie algebra cohomology} $H^2(\g,\R)$, cf.\ e.g.\ \cite[\S 2.3]{JV}.
This is the cohomology of the cochain complex $C^{n}(\g,\R)$
of continuous, alternating, $n$-linear maps $\g^n \rightarrow \R$, with differential
$\delta \colon C^{n}(\g,\R) \rightarrow C^{n+1}(\g,\R)$ defined by
\begin{equation}\label{CE}
\delta \psi(x_0,\ldots,x_{n}):= \sum_{0\leq i<j\leq n}
(-1)^{i+j} \psi([x_i,x_j],x_0,\ldots,\widehat{x}_i, \ldots, 
\widehat{x}_j, \ldots, x_n)\,.
\end{equation}

A continuous linear splitting $s \colon \g \rightarrow \widehat{\g}$ of the central extension 
$\R \rightarrow \widehat{\g} \rightarrow \g$
gives rise to the 
2-cocycle $\psi(X,Y) := [s(X),s(Y)] - s([X,Y])$.
Given a point $x_0 \in \M$, we split \eqref{poissonrijtje} by mapping 
$X_{f} \in \X_{\ham}(\M,\Omega)$ to the unique Hamiltonian function $f^{x_0}$
vanishing on $x_0$.
The splitting
\begin{equation}\label{splitting}
s_{x_0} \colon X_{f} \mapsto f^{x_0}
\end{equation}
yields the \emph{Kostant-Souriau cocycle} $\psi_{KS}$ on $ \X_{\ham}(\M)$
that is given by
\begin{equation}\label{KS}
\ps_{KS}(X_f,X_g)=\{f,g\}(x_0) = \Om(X_f,X_g)(x_0)\,.
\end{equation} 

\subsection{Group extensions from Hamiltonian actions}

If $\M$ is a compact (hence finite dimensional) manifold,
then $\Ham(\M,\Om)$
and $\Quant(\cP,\Theta)$ are both Fr\'echet Lie groups, with Lie algebras
$\X_{\ham}(\M,\Omega)$ and $\X(\cP,\Th)$, respectively, see
 \cite[\S 3]{RS}, 
{\cite[\S VIII.4]{Om74}}.
Even though this need no longer be the case if $\M$ is infinite dimensional, 
it is still true that the pullback $\widehat{G}$ of \eqref{kostant} under the 
Hamiltonian action of a Lie group $G$ on $\M$ has a smooth Lie group structure.
\begin{Theorem}{\rm \cite[Thm 3.4]{NV03}}\label{pullb}
Let $(\M,\Om)$ be
a prequantizable, symplectic manifold
with a Hamiltonian action of
a connected Lie group $G$.
Then 
the pullback of the prequantization central extension \eqref{kostant} 
by the action $G\to\Ham(\M,\Om)$ 
provides a  central Lie group extension 
\begin{equation}
S^1 \rightarrow \widehat{G} \rightarrow G\,.
\end{equation}
\end{Theorem}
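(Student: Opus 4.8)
The plan is to realize $\widehat G$ concretely as the fibre product
\[
  \widehat G \;:=\; G\times_{\Ham(\M,\Om)}\Quant(\cP,\Th)_0
  \;=\;\{(g,\ph)\in G\times\Quant(\cP,\Th)_0 \;:\; \al(g)=p(\ph)\},
\]
where $\al\colon G\to\Ham(\M,\Om)$ is the action homomorphism and $p\colon\Quant(\cP,\Th)_0\to\Ham(\M,\Om)$ the projection in \eqref{kostant}. With the operation inherited from the product, $\widehat G$ is a group; the map $\widehat G\to G$, $(g,\ph)\mapsto g$, has central kernel $S^1=\Ker p$, and it is onto once $\al(g)$ lies in the image of $p$ for every $g$, which (as $G$ is connected) will drop out of the construction below. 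So the entire content is to equip $\widehat G$ with a smooth structure for which $\widehat G\to G$ is a smooth principal $S^1$-bundle with smooth multiplication and inversion; no a priori smooth structure on $\Ham(\M,\Om)$ or $\Quant(\cP,\Th)$ is needed. When $\M$ is finite dimensional this is immediate, since $\Quant(\cP,\Th)_0\to\Ham(\M,\Om)$ is then a Lie group $S^1$-extension (Kostant--Souriau, \cf \cite[\S 3]{RS}) and the pullback of a Lie group extension along the smooth homomorphism $\al$ is again one. The interest of the theorem is the infinite dimensional case, where neither $\Ham(\M,\Om)$ nor $\Quant(\cP,\Th)$ need be manifolds, so $\widehat G$ must be built directly from the prequantum data.

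The key step is to manufacture a smooth local section of $\widehat G\to G$ near $e$ --- a local ``prequantization of the action''. Fix a chart $U\subseteq G$ about $e$ and a smooth family of smooth paths $t\mapsto\ga_g(t)$, $t\in[0,1]$, from $e$ to $g$, depending smoothly on $g\in U$ (say $\ga_g(t)=\exp(t\xi)$ in canonical coordinates $g=\exp\xi$). Acting on $\M$ gives a smooth family of Hamiltonian isotopies $\phi^g_t:=\al(\ga_g(t))$ of $(\M,\Om)$, defined for all $t\in[0,1]$ because they are given by the $G$-action; choose Hamiltonians $f^g_t\in C^\infty(\M)$ depending smoothly on $(g,t)$, for instance normalized to vanish at a fixed base point $x_0\in\M$, or read off from the comoment map. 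The isotopy $\phi^g_t$ with these Hamiltonians lifts canonically to the connection-preserving isotopy of $\cP$ generated by the time-dependent vector field $\zeta_{f^g_t}$ of \eqref{iso}; its existence is automatic for finite dimensional $\M$ (the fibrewise part is the integral of $f^g_t$ along the base flow) and holds in general by the estimates of \cite[Thm 3.4]{NV03}. Its time-$1$ value $\widehat\ph_g\in\Quant(\cP,\Th)$ covers $\al(g)$, so $s\colon g\mapsto(g,\widehat\ph_g)$ is a candidate local section. That $s$ is smooth --- that $\widehat\ph_g$ depends smoothly on $g$ --- is the analytic heart of the matter, resting on smooth dependence of solutions of time- and parameter-dependent ODEs on their parameters in the locally convex category.

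Granting this, one glues. For $g,h\in U$ with $gh\in U$, the elements $\widehat\ph_g\widehat\ph_h$ and $\widehat\ph_{gh}$ of $\Quant(\cP,\Th)$ both cover $\al(gh)$, hence differ by some $c(g,h)\in S^1$; by the smoothness above, $c$ is a smooth map on $\{(g,h)\in U\times U:gh\in U\}$ satisfying the $S^1$-valued group $2$-cocycle identity, since the $\widehat\ph$'s multiply associatively in $\Quant(\cP,\Th)$. Hence $U\times S^1$ with the twisted product $(g,z)(h,w)=(gh,\,zw\,c(g,h))$ is a local Lie group, identified with the part of $\widehat G$ over $U$; translating this chart by the abstract group multiplication of $\widehat G$ and using that $U$ generates the connected group $G$ produces a smooth atlas on $\widehat G$ in which multiplication, inversion and the projection to $G$ are smooth and $S^1$ is central. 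Smoothness of the chart transitions --- once more from smooth dependence of the lifted flows on path endpoints --- also shows the germ of this structure is independent of the choices of $U$, $\ga_g$, $f^g_t$, which alter $c$ only by a smooth $S^1$-valued coboundary.

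The main obstacle is the analytic input of the second paragraph: smooth dependence of the lifted isotopies of $\cP$ on the group parameter, uniform enough to yield a genuine Fr\'echet-type Lie group atlas when $\M$ --- and hence $\Quant(\cP,\Th)$ --- is infinite dimensional. It is also worth recording why the construction closes up to $S^1$ rather than $\R$: it is precisely an integration of the $\R$-central extension of $\g$ pulled back from \eqref{poissonrijtje}, whose period homomorphism governing integrability (\cf \cite[Thm.\ 7.9]{N02}) is controlled by the periods of $\Om$ on $\M$ --- the holonomy of $\Th$ around a null-homotopic loop being such a period. These periods are integral precisely because $\Om$ admits the prequantum bundle $(\cP,\Th)$, so the obstruction is $S^1$-valued and $\widehat G$ is a central $S^1$-extension of $G$.
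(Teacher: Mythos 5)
A preliminary caveat: the paper contains no proof of this statement --- it is quoted verbatim from \cite[Thm 3.4]{NV03} --- so the only meaningful comparison is with the construction in that reference.

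Your architecture (realize $\widehat G$ as the fibre product, note that $\Ker p\simeq S^1$ is central, build a local section near $e$ by lifting the isotopies $t\mapsto\al(\ga_g(t))$ to connection-preserving isotopies of $\cP$, and glue via the resulting local $S^1$-valued group $2$-cocycle) is the standard one and is essentially the route of \cite{NV03}. There is, however, one point where your argument is circular rather than merely deferred: for the ``analytic heart'' you appeal to ``the estimates of \cite[Thm 3.4]{NV03}'', i.e.\ to the theorem being proved. The gap is real but closable without any locally convex ODE theory, and closing it also simplifies your gluing step. Since $\M$ is connected, a connection-preserving lift of $\al(g)$ is completely determined by the image of a single point $p_0\in\cP$ over $x_0$: two lifts differ by a constant in $S^1$. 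The lifted isotopy applied to $p_0$ is the horizontal lift of the smooth path $t\mapsto\al(\ga_g(t))(x_0)$ corrected by the fibre rotation $\exp\bigl(i\int_0^t f^g_s(\al(\ga_g(s))x_0)\,ds\bigr)$, so the only differential equation actually being solved lives in the one-dimensional fibre, with data depending smoothly on $g$ because the action is smooth; no Fr\'echet ODE theory enters. This observation yields the cleaner packaging of \cite{NV03}: as an $S^1$-space over $G$, $\widehat G$ is the pullback $a^*\cP$ of $\cP\to\M$ along the orbit map $a(g)=\al(g)(x_0)$, which is a smooth principal $S^1$-bundle over $G$ in one stroke; what remains is to check smoothness of the multiplication via the same point-evaluation description, rather than to assemble an atlas from local cocycles. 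Finally, your closing paragraph on the period homomorphism is not needed for, and sits somewhat uneasily with, the construction you give: in the fibre-product picture the kernel is $S^1=\Ker p$ by construction, and the integrality of $[\Om]$ has already been spent in assuming that the prequantum bundle $(\cP,\Th)$ exists; the period argument belongs to the alternative integration route via \cite[Thm.\ 7.9]{N02}, which you are not following.
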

The derived Lie algebra extension 
$\R \rightarrow \widehat{\g} \rightarrow \g$
is given by the pullback 
of the Kostant-Souriau extension \eqref{poissonrijtje}
along the infinitesimal action $\g \rightarrow \X_{\ham}(\M,\Omega)$.
The linear splitting \eqref{splitting}
therefore induces a linear 
splitting of $\widehat{\g} \rightarrow \g$, and 
the corresponding 2-cocycle $\si$ on $\g$ 
is the pullback by $\g \rightarrow \X_{\ham}(\M,\Omega)$ of the Kostant-Souriau cocycle. 
It is given explicitly by
\begin{equation}\label{splittingCE}
\si(\xi,\et)= \Om(\xi_\M,\et_\M)(x_0)\,,
\end{equation}
where  $\xi_\M$ denotes the fundamental vector field on $\M$ for $\xi\in\g$.


\section{Exact volume preserving diffeomorphisms}

In order to obtain central extensions of the Lie group $G = \Diff_{\ex}(M,\nu)$ of exact volume 
preserving diffeomorphisms of a compact manifold $M$
endowed with volume form $\nu$, we consider its Hamiltonian action
on the non-linear Grassmannian of codimension 2 
embedded submanifolds of $M$.

\subsection{Non-linear Grassmannians}

Let $M$ be a closed, connected manifold of  dimension $m$.
The {\it non-linear Grassmannian} $\Gr_{k}(M)$ consists of $k$-dimensional, closed, oriented, embedded 
submanifolds $N \subseteq M$.  It is a Fr\'echet manifold in a natural way, cf.\ 
\cite{KM97, GV14}.
The tangent space of $\OGr_k(M)$ at $N$ can
naturally be identified with the space of smooth sections of the normal
bundle $TN^\perp:=(TM|_N)/TN$.

For every $r\geq 0$, the transgression map $\tau \colon \Omega^{k+r}(M) \rightarrow \Omega^{r}(\Gr_{k}(M))$ is defined by
$$
(\tau{\alpha})_{N}([Y_1],\ldots,[Y_r]) := \int_{N}i_{Y_r}\ldots i_{Y_{1}}(\alpha|_{N}).
$$
Here all $[Y_j]$ are tangent vectors at $N\in\OGr_k(M)$, \ie sections
of $TN^\perp$. The expression above is independent of the vector fields $Y_j$ on $M$ along $N$ chosen to represent $[Y_j]$.

The natural group action of $\mathrm{Diff}(M)$ on $\Gr_{k}(M)$, defined by 
$(\ph , N) \mapsto \ph(N)$, is smooth, since it
descends from the action of $\Diff(M)$ on the manifold
of embeddings into $M$ defined by composition
$(\ph , f) \mapsto \ph \o f$. It differentiates to
the Lie algebra action  $\X(M) \rightarrow \X(\Gr_{k}(M))$
given by $X \mapsto \ta_X$ with $\ta_X(N) = [X|_{N}]$.
The transgression enjoys the following functorial properties:
\begin{align}\label{fun}
&d\circ \tau = \tau \circ d,\quad\quad\quad \ph^* \circ \tau = \tau \circ \ph^*, \\
&i_{\ta_{X}}\circ \tau = \tau\circ i_{X},\quad L_{\ta_X} \circ \tau = \tau\circ L_{X}.\nonumber
\end{align}


\begin{Theorem}{\rm \cite[\S 25.3]{I96} \cite[Thm.\ 1]{HV04}}\label{taual} 
Let $\al\in\Om^{k+2}(M)$ be a closed differential form with integral cohomology class. Then the non-linear Grassmannian $\OGr_{k}(M)$ endowed with the closed 2-form $\Om=\tau\al$ is prequantizable,
\ie there exist an $S^1$-bundle
$\mathcal P\to\OGr_{k}(M)$ with
connection form $\Th\in\Omega^1(\mathcal P)$ and curvature~$\Om$.
\end{Theorem}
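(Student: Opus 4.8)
\noindent\textit{Strategy.} A closed $2$-form on a connected manifold admits a prequantum $S^1$-bundle with connection precisely when its de Rham class has integral periods, and this principle carries over to the Fr\'echet setting. So the plan is: (i) on each connected component of $\OGr_k(M)$, verify that $[\ta\al]$ has integral periods; (ii) invoke the (infinite-dimensional) prequantization construction to produce $\cP$ and $\Th$ with curvature $\ta\al$. Step (ii) is where one leans on \cite{I96, HV04}; step (i) carries the geometric content and is what I would spell out.

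\noindent For (i), introduce the tautological incidence manifold $Z := \{(N,x)\in\OGr_k(M)\times M : x\in N\}$, with its two smooth projections $p\colon Z\to\OGr_k(M)$ — a fibre bundle whose fibre over $N$ is $N$ itself, compact and oriented of dimension $k$ — and $q\colon Z\to M$. By the very definition of the transgression, $\ta\al = p_!\,q^*\al$, fibre integration along these $k$-dimensional fibres. Now fix a connected component $\M\subseteq\OGr_k(M)$ and a smooth map $f\colon\Sigma\to\M$ from a closed oriented surface; pulling back $Z$ along $f$ yields a smooth fibre bundle $p_f\colon\mathcal E_f\to\Sigma$ with compact oriented total space of dimension $k+2$, together with a map $\tilde q\colon\mathcal E_f\to M$. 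Since fibre integration commutes with pullback, and $\int_\Sigma(p_f)_!=\int_{\mathcal E_f}$ by the projection formula,
\[
	\int_\Sigma f^*(\ta\al) \;=\; \int_\Sigma (p_f)_!\bigl(\tilde q^*\al\bigr) \;=\; \int_{\mathcal E_f}\tilde q^*\al \;=\; \bigl\langle [\al],\,\tilde q_*[\mathcal E_f]\bigr\rangle .
\]
As $\tilde q_*[\mathcal E_f]\in H_{k+2}(M,\Z)$ and $[\al]$ is an integral class by hypothesis, this lies in $\Z$. Because every integral class in $H_2(\M)$ is represented by such an $f$, the closed $2$-form $\ta\al$ has integral periods on $\M$.

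\noindent For (ii), with integral periods in hand the prequantum bundle is produced by the standard recipe over each component $\M$: let $\cP$ consist of pairs (smooth path in $\M$ starting at a fixed base point, phase in $S^1$), modulo the equivalence that identifies two such pairs with a common endpoint whenever their phases differ by $\exp\!\bigl(2\pi i\int_B\ta\al\bigr)$ for a $2$-chain $B$ whose boundary is the difference of the two paths — well defined exactly by the integrality just proved — equipped with the tautological connection $\Th$, whose curvature is $\ta\al$; alternatively one simply quotes \cite{I96, HV04}, where this is carried out in the present setting. Assembling these over the components gives $\cP\to\OGr_k(M)$ and $\Th$ with curvature $\Om=\ta\al$.

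\noindent The main obstacle is step (ii) rather than step (i). In finite dimensions ``integral periods $\Rightarrow$ prequantizable'' is the Kostant--Weil theorem, but for a manifold modelled on Fr\'echet spaces — which need not admit smooth partitions of unity — one must check that de Rham cohomology, singular cohomology, and the period homomorphism still interact as expected, and that the path-space construction genuinely yields a smooth Fr\'echet $S^1$-bundle; this is the technical point that the cited references settle. The period computation of step (i) is by contrast essentially formal, once the identity $\ta\al = p_!q^*\al$ and the orientation conventions on $\mathcal E_f$ are pinned down.
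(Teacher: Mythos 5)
The paper states this theorem as a quoted result from \cite{I96} and \cite{HV04} and gives no proof of its own, so there is nothing internal to compare against; your reconstruction follows the same route as the cited sources, namely identifying $\tau\al=p_!q^*\al$ via the incidence bundle and reducing the periods of $\tau\al$ over surfaces in $\OGr_k(M)$ to periods of $\al$ over $(k{+}2)$-cycles in $M$, then invoking the integral-periods criterion for prequantizability in the Fr\'echet setting. Your argument is correct, and you rightly flag that the only genuinely delicate point (the validity of the Kostant--Weil construction for Fr\'echet manifolds and for non-simply-connected components) is exactly what the cited references supply.
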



\subsection{Lichnerowicz central extensions}

Let $\nu \in \Omega^{m}(M)$ be a volume form on $M$, normalized so that $\mathrm{vol}_{\nu}(M) = 1$.
It induces a symplectic form $\Om=\tau\nu$
on the codimension two non-linear Grassmannian $\OGr_{m-2}(M)$ \cite{I96}.
This is the higher dimensional version of the natural symplectic form on the space of knots in $\R^3$ \cite{MW83}.
The natural action of the group $\Diff(M,\nu)$ of volume preserving diffeomorphisms on $\OGr_{m-2}(M)$ is symplectic, as
$\ph^*\ta\nu=\ta\ph^*\nu=\ta\nu$ for all $\ph\in\Diff(M,\nu)$
by \eqref{fun}. To get a Hamiltonian action, we have to restrict to the subgroup $\Diff_{\ex}(M,\nu)$ of \emph{exact volume preserving diffeomorphisms}. 

Its Lie algebra ${\X}_{\ex}(M,\nu)$ of \emph{exact divergence free vector fields}
is the kernel of the infinitesimal flux homomorphism, defined on the Lie algebra 
$\X(M,\nu)$ of divergence free vector fields by
\begin{equation}\label{flu}
\X(M,\nu)\to H^{m-1}(M,\R),\quad X\mapsto[i_X\nu].
\end{equation}
We denote by $X_\al$ the exact divergence free vector field with potential 
$\al\in\Om^{m-2}(M)$, \ie $i_{X_\al}\nu=d\al$.

The Lie algebra homomorphism \eqref{flu} is integrated by 
\emph{Thurston's flux homomorphism}.
On the universal cover of the identity component of the group
of volume preserving diffeomorphisms, we define
\begin{equation}\label{split1}
\widetilde{\Flux}:\widetilde{\Diff}(M,\nu)_0\to H^{m-1}(M,\R) \quad \text{by} \quad
\widetilde{\Flux}([\ph_{t}])=\int_0^1[i_{X_t}\nu]dt\,,
\end{equation} 
where $\ph_t$ is a volume preserving isotopy from the identity to $\ph$, 
and $X_t$ is the time dependent vector field such that $\frac{d}{dt}\ph_t=X_t\o\ph_t$.
By \cite[Thm.\ 3.1.1]{B}, this is a well defined group homomorphism.

For any codimension one submanifold $N \subset M$,
the integral $\int_{N} \widetilde{\Flux}([\ph_{t}])$ 
is the volume swept out by  $N$ under $\ph_{t}$. 
Therefore, the monodromy
subgroup $\Gamma := \widetilde{\Flux}(\pi_1(\Diff(M,\nu)_0))$  of $H^{m-1}(M,\R)$
is discrete. It follows that 
equation \eqref{split1} factors through a Lie group homomorphism
\begin{equation}\label{split}
\Flux:\Diff(M,\nu)_0\to H^{m-1}(M,\R)/\Ga,\quad
\Flux(\ph)=\int_0^1[i_{X_t}\nu]dt\mod\Ga\,.
\end{equation} 
The group of exact volume 
preserving diffeomorphisms $\Diff_{\ex}(M,\nu)$ is now defined as the kernel of the 
Flux homomorphism;  
it is a Lie group with Lie algebra ${\X}_{\ex}(M,\nu)$ \cite{B, KM97}.


Since $\mathrm{vol}_{\nu}(M)$ is normalized to $1$, the cohomology class $[k\nu]\in H^m(M,\R)$
is integral for every $k\in \Z$.
By Theorem \ref{taual}, this implies that
the manifold $\OGr_{m-2}(M)$ with symplectic form $\Om=k\, \tau\nu$ is prequantizable.
The natural action of $\Diff_{\ex}(M,\nu)$
on $\OGr_{m-2}(M)$ is Hamiltonian,
as $i_{\ta_{X_\al}}\ta\nu=\ta i_{X_\al}\nu=\ta d\al=d\ta\al$
for all $X_\al\in\X_{\ex}(M,\nu)$ by \eqref{fun}.
Now we can apply Theorem \ref{pullb} to this Hamiltonian action  
on a connected component $\mathcal M$ of $\OGr_{m-2}(M)$.
This yields the central Lie group extension 
\begin{equation}\label{extdiffexgroup}
S^1 \rightarrow \widehat{\Diff}_{\ex}(M,\nu) \rightarrow \Diff_{\ex}(M,\nu)
\end{equation}
of the group of exact 
volume preserving diffeomorphisms.

To obtain the corresponding Lie algebra 2-cocycle, we fix a point $Q \in \M$, that is, a codimension two submanifold $Q \subset M$ in the connected component $\M$
of the nonlinear Grassmannian.
By \eqref{splittingCE}, the Lie algebra extension of $\X_{\ex}(M,\nu)$ corresponding to
\eqref{extdiffexgroup} for $k = 1$, is described by the 
Lie algebra 2-cocycle  
\begin{equation}\label{lich}
\la^{\nu}_{Q}(X,Y)=(\tau \nu)_{Q}(\ta_X,\ta_Y)
= \int_{Q}i_Yi_X\nu
\end{equation}
on $\X_{\ex}(M,\nu)$, which we call the \emph{singular Lichnerowicz cocycle}.
If the class $[k\nu]$ is used to construct the extension, then the corresponding 
2-cocycle is $k\la_{Q}^{\nu}$.

\begin{Theorem}\label{lint}{\rm\cite[\S 25.5]{I96}\cite[Thm.\ 2]{HV04}}
Let $\nu$ be a volume form on $M$ with $\mathrm{vol}_{\nu}(M)=1$
and $Q$ a codimension two embedded submanifold of $M$. Then the
Lie algebra extensions defined by integral multiples of the
cocycle $\la^{\nu}_{Q}$ of equation \eqref{lich}
integrate to central Lie group
extensions of the group of exact volume preserving diffeomorphisms $\Diff_{\ex}(M,\nu)$.
\end{Theorem}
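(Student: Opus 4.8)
The plan is to apply Theorem \ref{pullb} directly to the Hamiltonian action of $\Diff_{\ex}(M,\nu)$ on a connected component $\M$ of the nonlinear Grassmannian $\OGr_{m-2}(M)$, and then to match the cocycle of the resulting group extension with $k\la^\nu_Q$. First I would recall that, by the normalization $\mathrm{vol}_\nu(M)=1$, the class $[k\nu]\in H^m(M,\R)$ is integral for each $k\in\Z$, so Theorem \ref{taual} guarantees that $\OGr_{m-2}(M)$ equipped with $\Om=k\,\tau\nu$ is prequantizable; restricting the prequantum $S^1$-bundle to the connected component $\M$ gives a prequantum bundle over $(\M,k\,\tau\nu)$. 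Next I would invoke the functorial identity $i_{\ta_{X_\al}}\tau\nu=\tau\,i_{X_\al}\nu=\tau\,d\al=d\,\tau\al$ from \eqref{fun} to see that each fundamental vector field $\ta_{X_\al}$ on $\M$ is Hamiltonian with potential $\tau\al$, so the action of $\Diff_{\ex}(M,\nu)$ on $\M$ is Hamiltonian in the sense required by Theorem \ref{pullb}.

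With these two ingredients in place, Theorem \ref{pullb} produces a central $S^1$-extension $\widehat{\Diff}_{\ex}(M,\nu)\to\Diff_{\ex}(M,\nu)$, one for each $k\in\Z$. It remains to identify the associated Lie algebra 2-cocycle. By the discussion following Theorem \ref{pullb}, the derived extension is the pullback of the Kostant-Souriau extension along the infinitesimal action, and with respect to the base point $Q\in\M$ the corresponding cocycle is given by \eqref{splittingCE}, namely $\si(X_\al,X_\be)=(\Om)_Q(\ta_{X_\al},\ta_{X_\be})=(k\,\tau\nu)_Q(\ta_{X_\al},\ta_{X_\be})=k\int_Q i_{X_\be}i_{X_\al}\nu=k\,\la^\nu_Q(X_\al,X_\be)$. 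Since integrating cocycles is linear in the cocycle and every cohomology class of an integral multiple of $\la^\nu_Q$ is realized this way, the Lie algebra extensions defined by integral multiples of $\la^\nu_Q$ all integrate, which is the assertion of the theorem.

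The one genuine subtlety — and the step I expect to be the main obstacle — is making sure the hypotheses of Theorem \ref{pullb} are literally met in the infinite-dimensional setting: one must check that the symplectic form $k\,\tau\nu$ is (weakly) nondegenerate on the chosen component $\M$, that $\Diff_{\ex}(M,\nu)$ is connected (or else pass to its identity component, which suffices since $\Diff_{\ex}(M,\nu)$ has Lie algebra ${\X}_{\ex}(M,\nu)$ and the extension question is about the Lie algebra), and that the exactness condition defining $\Diff_{\ex}(M,\nu)$ is exactly what is needed for the $G$-action to lift from symplectic to Hamiltonian. The nondegeneracy and the Hamiltonian property are handled by the transgression formalism of \cite{I96,HV04}, and connectedness can be circumvented by working with $\Diff_{\ex}(M,\nu)_0$; beyond that, the argument is a bookkeeping exercise tracking the factor $k$ through \eqref{splittingCE} and \eqref{lich}. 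So the proof is essentially an assembly of Theorems \ref{taual} and \ref{pullb} together with the cocycle computation \eqref{splittingCE}, with the care concentrated in verifying the infinite-dimensional hypotheses.
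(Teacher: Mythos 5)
Your proposal is correct and follows essentially the same route the paper takes: prequantizability of $(\OGr_{m-2}(M),k\,\tau\nu)$ via Theorem \ref{taual}, the Hamiltonian property of the $\Diff_{\ex}(M,\nu)$-action from the transgression identities \eqref{fun}, the pullback construction of Theorem \ref{pullb}, and the identification of the resulting cocycle as $k\la^{\nu}_{Q}$ via \eqref{splittingCE}. The paper presents this argument in the discussion preceding the theorem statement (citing \cite{I96,HV04} for the original result) rather than as a formal proof, so your write-up matches it in substance.
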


Recall that two classes $[Q] \in H_{m-k}(M,\R)$ and 
$[\alpha] \in H^{k}(M,\R)$ are called \emph{Poincar\'e dual} if 
$\int_{Q} \gamma = \int_{M} \eta \wedge \gamma$ for all closed 
$\gamma \in \Omega^{m-k}(M)$.
If $[\et]\in H^2(M,\R)$ is Poincar\'e dual to 
$[Q]\in H_{m-2}(M,\R)$, then by \cite[Prop.\ 2]{Vi}
the cocycle $\la^{\nu}_{Q}$ is cohomologous to the {\it Lichnerowicz cocycle} \cite{Li74}
\begin{equation}\label{lichcoc}
\la^{\nu}_\et(X,Y)=\int_M\et(X, Y)\nu=\int_M\et\wedge i_Yi_X\nu.
\end{equation}
{If $\dim M\ge 3$,} the map  $[\et]\mapsto[\la^\nu_\et]$ is believed to be an isomorphism between 
$H^2(M,\R)$ and the second Lie algebra cohomology
group $H^2(\X_{\ex}(M,\nu),\R)$, see \cite[\S 10]{R95} for 
the outline of a proof.

\begin{Remark}\label{pd}
If $[\eta]$ is Poincar\'e dual to $[Q]$ with $Q\in\Gr_{m-2}(M)$, then in particular, it is 
an integral cohomology class. Conversely,
every integral cohomology class $[\et]\in H^2(M,\R)$ is the Poincar\'e dual
of a closed submanifold of codimension two in $M$;
it can be obtained (cf.\ \cite[Prop.\ 12.8]{BT82}) as the zero set of a section transversal to the zero section 
in a rank two vector bundle with Euler class $[\et]$. 
\end{Remark}

We infer that the Lichnerowicz cocycle \eqref{lichcoc} gives rise to an 
\emph{integrable} 
Lie algebra extension if $[\eta] \in H^2(M,\R)$ is an \emph{integral} class 
in de Rham cohomology
(in the sense that on integral singular 2-cycles, it evaluates to an integer). 
We denote by  $H^2(M,\R)_{\Z}$ 
the space of integral de Rham classes.
It follows that the image of $H^2(M,\R)_{\Z}$
by the map $[\et]\mapsto [\la_\et^{\nu}]$,
which coincides with the image of $H_{n-2}(M,\Z)$ by the map $[Q]\mapsto[\la_Q^{\nu}]$,
lies in the lattice of integrable classes in $H^2(\X_{\ex}(M,\nu),\R)$.


\section{Strict contactomorphisms}
Let $P$ be a compact manifold of dimension $2n+1$, 
equipped with a contact 1-form $\th$. 
The group $\Diff(P,\th)$ of strict contactomorphisms 
is a subgroup of the volume preserving diffeomorphism group $\Diff(P,\mu)$,
where the volume form $\mu$ is a constant multiple of $\th\wedge(d\th)^n$.
The group of {\it exact strict contactomorphisms} is defined as
\begin{equation}
\Diff_{\ex}(P,\th) := \Diff(P,\th)_{0} \cap \Diff_{\ex}(P,\mu)\,.
\end{equation}
We use Theorem \ref{lint} to investigate central extensions of 
locally convex Lie groups $G$
that act on $P$ by exact strict contactomorphisms.

The motivating example is the case where $P \rightarrow M$ is a 
prequantum bundle over a compact symplectic manifold 
$(M,\omega)$; in this case $\Diff(P,\th)$ is the quantomorphism group, while 
$\Diff_{\ex}(P,\th)$ 
coincides with the identity component $\Quant(P,\th)_0$ of the quantomorphism group.
Since the latter is a locally convex Lie group (it is even an ILH-Lie group by 
\cite[VIII.4]{Om74}), we thus obtain central 
Lie group extensions of $G = \Quant(P,\th)_0$ by $S^1$.

\subsection{Strict contactomorphisms}
The contact form $\th$ gives rise to the volume form $\th \wedge (d\th)^n$. We will use 
the following two normalizations of this form: 
\begin{equation}\label{normalisation}
\mu := \frac{1}{(n+1)!}\,\th \wedge (d\th)^{n}\,, \quad \text{and}
\quad
\nu :=  \frac{1}{\mathrm{vol}_{\mu}(P)} \,\mu\,.
\end{equation}
The \emph{Reeb vector field} $E \in \X(P)$ is uniquely determined by $i_{E}\th = 1$ and ${i_{E}d\th = 0}$. 
We define the \emph{strict contactomorphism group} 
\[
\mathrm{Diff}(P,\th) := \{\ph \in \mathrm{Diff}(P)\,;\, \ph^* \th = \th\}\subset\Diff(P,\mu)
\]
and the \emph{Lie algebra of strict contact vector fields}
\[
\X(P,\th) = \{X \in \X(P)\,;\, L_{X} \th = 0\}\subset\X(P,\mu)\,.
\]
For every isotopy $\ph_t\in\Diff(P)$, starting at the identity and 
corresponding to the time dependent vector field $X_t\in\X(P)$,
we have $\ph_t\in\Diff(P,\th)$ for all $t$ if and only if 
the vector field $X_t\in\X(P,\th)$ for all $t$. 

A Hamiltonian function $f \in C^{\infty}(P)^{E} =\{f\in C^\oo(P):L_Ef=0\}$
defines a unique  strict contact vector field
$\zeta_{f} \in \X(P,\th)$ by 
\[i_{\zeta_{f}} \th = f, \quad i_{\zeta_{f}}d\th = -df\,.\]
The corresponding map 
\begin{equation}\label{zet}
\ze:C^{\infty}(P)^{E} \rightarrow \X(P,\th)
\end{equation}
is an isomorphism of Fr\'echet Lie algebras, if $C^\oo(P)^E$ is equipped with the 
Lie bracket 
\begin{equation}\label{lb}
\{f,g\} = d\th(\zeta_{f},\zeta_{g})=L_{\ze_f}g\,.
\end{equation}

\begin{Proposition}\label{trekterug}
Let $Q$ be a codimension two submanifold of the contact manifold $(P,\th)$.
Then the pullback to $C^\oo(P)^E$
of the singular Lichnerowicz cocycle $\la^{\mu}_{Q}$
on $\X(P,\mu)$ by the map $\ze$ in \eqref{zet}
is given by 
\[
(\zeta^* \la^{\mu}_{Q})(f,g) = \si_{Q}(f,g) + \frac{1}{n+1} \delta \rho_{Q}(f,g)\,, 
\]
where 
the 2-cocycle $\si_{Q}$ and the 1-cochain $\rho_{Q}$ on $C^{\infty}(P)^{E}$ are given by

\begin{eqnarray}\label{sg2}
\si_Q(f,g) & :=  & \phantom{-}\! \int_Q gdf\wedge(d\th)^{n-1}/(n-1)!\,,\\
\rho_{Q}(h) & := & -\int_{Q}h\th\wedge(d\th)^{n-1}/(n-1)!\,.
\end{eqnarray}
\end{Proposition}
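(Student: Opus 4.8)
The plan is to compute $(\zeta^*\la^\mu_Q)(f,g) = \int_Q i_{\ze_g}i_{\ze_f}\mu$ directly, using the normalisation $\mu = \th\wedge(d\th)^n/(n+1)!$, and to reorganize the result into the claimed ``principal'' part $\si_Q$ plus a coboundary term. First I would expand
\[
i_{\ze_g}i_{\ze_f}\bigl(\th\wedge(d\th)^n\bigr)
\]
by repeatedly applying the derivation rule $i_X(\al\wedge\be) = (i_X\al)\wedge\be + (-1)^{|\al|}\al\wedge i_X\be$, together with the defining identities $i_{\ze_f}\th = f$, $i_{\ze_f}d\th = -df$ (and likewise for $g$). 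Contracting $\th\wedge(d\th)^n$ twice produces three types of terms, according to whether both contractions hit the $(d\th)^n$ factor, or one of them hits the $\th$ factor: the ``both hit $(d\th)^n$'' term gives $n(n-1)\,\th\wedge df\wedge dg\wedge(d\th)^{n-2}$ up to sign, while the two ``one hits $\th$'' terms give $f$ or $g$ times $\pm n\,df\wedge(d\th)^{n-1}$ (or $dg\wedge(d\th)^{n-1}$). Dividing by $(n+1)!$ and keeping track of the combinatorial factors $n(n-1)/(n+1)! = 1/((n-1)!\,(n+1))$ and $n/(n+1)! = 1/(n!\,(n+1))$ (where I will need $n! = (n-1)!\cdot n$, so the latter also carries a hidden factor I must be careful with) is the bookkeeping core of the argument.

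Next I would identify the pieces with the objects in the statement. The term $\pm\frac{1}{n+1}\int_Q\bigl(g\,df - f\,dg\bigr)\wedge(d\th)^{n-1}/(n-1)!$ should be split as $\frac{1}{n+1}\bigl(2\si_Q(f,g)\bigr)$ minus something, because $\int_Q g\,df\wedge(d\th)^{n-1} - \int_Q f\,dg\wedge(d\th)^{n-1} = 2\int_Q g\,df\wedge(d\th)^{n-1} - \int_Q d(fg)\wedge(d\th)^{n-1}$, and the last integral equals $-\int_Q fg\,d(d\th)^{n-1} = 0$ since $(d\th)^{n-1}$ is closed and $Q$ is closed. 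So that combination is exactly $\frac{2}{n+1}\si_Q(f,g)$. Meanwhile the genuinely coboundary contribution, coming from the $\th\wedge df\wedge dg\wedge(d\th)^{n-2}$ term (and whatever extra $\si_Q$-like piece I need to absorb), should be matched against $\frac{1}{n+1}\de\rho_Q(f,g)$. Here I would use $\de\rho_Q(f,g) = -\rho_Q(\{f,g\})$, with $\{f,g\} = L_{\ze_f}g = i_{\ze_f}dg = d\th(\ze_f,\ze_g)$; so
\[
\de\rho_Q(f,g) = \int_Q \{f,g\}\,\th\wedge(d\th)^{n-1}/(n-1)!\,.
\]
To see this equals the leftover term I would rewrite $\{f,g\}\,(d\th)^{n-1}$ using Cartan calculus: from $i_{\ze_g}d\th = -dg$ one gets $dg\wedge(d\th)^{n-1} = -i_{\ze_g}(d\th)^n/1$ up to combinatorics, hence $\{f,g\}(d\th)^{n-1} = (i_{\ze_f}dg)(d\th)^{n-1}$ can be traded for a $df\wedge dg\wedge(d\th)^{n-2}$ expression modulo exact forms on $Q$, which is precisely what I need. (The identity $i_{\ze_f}\bigl(dg\wedge(d\th)^{n-1}\bigr)$ expanded out, integrated over the closed manifold $Q$, is the clean way to generate the Stokes cancellations.)

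The main obstacle I anticipate is \emph{not} conceptual but combinatorial: getting every factorial, sign, and contraction multiplicity exactly right so that the two surviving terms land with coefficient $1$ in front of $\si_Q$ and coefficient $\frac{1}{n+1}$ in front of $\de\rho_Q$, rather than some other rational multiple. In particular one must be careful that $\si_Q$ as written uses $(d\th)^{n-1}/(n-1)!$ while $\mu$ uses $(d\th)^n/(n+1)!$, so the ratio of normalisations contributes a factor that has to conspire with the $n$, $n-1$ coming from the Leibniz expansion; and the antisymmetrization $g\,df$ vs.\ $f\,dg$ must be handled via the Stokes identity above rather than silently. A secondary point to check is well-definedness: $f,g\in C^\infty(P)^E$ means $i_E\th=1$, $i_Ed\th=0$, $L_Ef=L_Eg=0$, which is what guarantees $\ze_f,\ze_g$ are genuine strict contact fields and, more importantly, that the intermediate manipulations (e.g.\ replacing $dg\wedge(d\th)^{n-1}$ by a contraction of $(d\th)^n$) are valid; but since $Q$ is just an embedded submanifold, no compatibility between $Q$ and $E$ is needed, so this part is routine. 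Once the coefficients are pinned down, the decomposition $(\zeta^*\la^\mu_Q)(f,g) = \si_Q(f,g) + \frac{1}{n+1}\de\rho_Q(f,g)$ follows, and since $\si_Q$ is manifestly a $2$-cochain and $\la^\mu_Q$, $\de\rho_Q$ are cocycles, $\si_Q$ is automatically a $2$-cocycle, completing the proof.
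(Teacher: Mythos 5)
Your proposal follows essentially the same route as the paper: expand $i_{\ze_g}i_{\ze_f}(\th\wedge(d\th)^n)$ by the Leibniz rule and the defining identities $i_{\ze_f}\th=f$, $i_{\ze_f}d\th=-df$, then use Stokes on the closed manifold $Q$ to antisymmetrize $f\,dg$ versus $g\,df$ and to handle the $\th\wedge df\wedge dg\wedge(d\th)^{n-2}$ term, and the coefficients do work out as you anticipate. One correction to your bookkeeping plan: in the actual computation the term $\{f,g\}\,\th\wedge(d\th)^{n-1}/(n-1)!$ (which your initial enumeration of terms omits --- it arises when $i_{\ze_g}$ hits the $df$ produced by $i_{\ze_f}$ acting on $(d\th)^n$) integrates to exactly $-\rho_Q(\{f,g\})=\de\rho_Q(f,g)$ with no trading needed, while it is the $\th\wedge df\wedge dg\wedge(d\th)^{n-2}$ term that gets converted, via expanding $d\bigl(\th\wedge(fdg-gdf)\wedge(d\th)^{n-2}\bigr)$, into more of the $(fdg-gdf)\wedge(d\th)^{n-1}$ piece feeding $\si_Q$ --- i.e.\ the roles of these two terms are the opposite of what you sketched, and your proposed identity for trading $\{f,g\}(d\th)^{n-1}$ against $df\wedge dg\wedge(d\th)^{n-2}$ is not the step that is actually required.
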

\begin{proof}
We calculate $\zeta^*\la_{Q}^{\mu}(f,g) = \la^{\mu}_{Q}(\zeta_{f}, \zeta_{g}) = 
\int_{Q} i_{\zeta_{g}}i_{\zeta_{f}}(\th\wedge (d\th)^{n})/(n+1)!$. 
First, note that
\begin{eqnarray}
i_{\zeta_{g}}i_{\zeta_{f}}(\th\wedge (d\th)^{n}/n!) &=&
i_{\zeta_{g}}(f(d\th)^n/n! + \th\wedge df \wedge (d\th)^{n-1}/(n-1)!) \nonumber\\
&=& -(fdg-gdf)\wedge(d\th)^{n-1}/(n-1)!  \label{pullLichnerowicz}\\
&& + \{f,g\} \,\th \wedge (d\th)^{n-1}/(n-1)!\nonumber \\
& &-\th\wedge df \wedge dg \wedge (d\th)^{n-2}/(n-2)!\,.\nonumber
\end{eqnarray}
Expanding $d\left(\th\wedge(fdg-gdf) \wedge (d\th)^{n-2}\right)$, we obtain
\begin{eqnarray*}
\th\wedge df \wedge dg \wedge (d\th)^{n-2}/(n-2)!
&=&
\phantom{+}\half(n-1) (fdg-gdf)\wedge(d\th)^{n-1}/(n-1)!\\
& & - \half d\left(\th\wedge(fdg-gdf) \wedge (d\th)^{n-2}/(n-2)!\right).
\end{eqnarray*}
Inserting this into \eqref{pullLichnerowicz} yields
\begin{eqnarray}
i_{\zeta_{g}}i_{\zeta_{f}}(\th\wedge (d\th)^{n}/n!) 
&=& - (n+1)\half (fdg-gdf)\wedge(d\th)^{n-1}/(n-1)!  \label{pullLichnerowicz2}\\
&& +  \{f,g\}\th\wedge (d\th)^{n-1}/(n-1)!\nonumber \\
& &+ \half d\left(\th\wedge(fdg-gdf) \wedge (d\th)^{n-2}/(n-2)!\right)\,.\nonumber
\end{eqnarray}
Since the value of $\la_{Q}^{\mu}(\zeta_{f}, \zeta_{g})$ is obtained by  
integrating the above expression over $Q$ and dividing by $n+1$,
the last term vanishes ($Q$ is closed),
the middle term yields a multiple of the coboundary $\delta \rho_{Q}(f,g) = \rho_{Q}(-\{f,g\})$, and 
the first term yields the cocycle $\si_{Q}(f,g)$.
\end{proof}
In particular the classes $\ze^*[\la_Q^\mu]$ and $[\si_Q]$ in 
$H^2(C^\oo(P)^E,\R)$ coincide.

\begin{Remark}[\bf Regular contact manifolds]\label{BWremark}
For us, the motivating example is the total space $(P,\th)$ of a prequantum $S^1$-bundle 
$\pi \colon P \rightarrow M$ 
 over a compact, symplectic manifold $(M,\omega)$.
 These are called \emph{regular} or 
 \emph{Boothby-Wang} contact manifolds \cite{BW}. 
The top form 
\[\mu = \frac{1}{(n+1)!}\th \wedge (d\th)^{n} = \frac{1}{(n+1)!}\th \wedge \pi^*\omega^n\]
is a volume form, and the Reeb vector field 
$E\in\X(P)$ coincides with the infinitesimal generator 
 of the principal $S^1$-action.
The group  of strict contactomorphisms coincides with
the quantomorphism group, hence it is a
Fr\'echet Lie group \cite{Om74,RS}.

The pullback by $\pi $ is an isomorphism
between the Poisson Lie algebra $C^\oo(M)$ and 
the Lie algebra $C^\oo(P)^E$ with Lie bracket \eqref{lb}.
Under the identification $C^{\infty}(M) \simeq C^{\infty}(P)^{E}$,
the isomorphisms \eqref{iso} and \eqref{zet}, both denoted by $\ze$, coincide.
Moreover, the cocycle $\si_Q$ in \eqref{sg2} can be identified with the cocycle $\psi_{\pi_*Q}$
on the Poisson Lie algebra, determined by 
the singular $2n-1$ cycle $C=\pi_*Q$ in $M$ by the formula \cite{JV}:
\[
\ps_C(f,g)=\int_Cgdf\wedge\om^{n-1}/(n-1)!,\quad f,g\in  C^\oo(M)\,.
\]
More details will be given in Section 5.
\end{Remark}

\subsection{Exact strict contactomorphisms}

Suppose that $G$ is a locally convex Lie group that acts 
smoothly {and effectively} on $P$
by exact strict contact transformations.
{Its Lie algebra $\g$ is then a subalgebra of 
$\X(P,\th)\simeq C^\oo(P)^E$.
We investigate the integrability of
the pullback $\iota^*[\si_Q] \in H^2(\g,\R)$
along the inclusion $\iota \colon \g \hookrightarrow C^{\infty}(P)^{E}$, where
$\si_Q$ is the cocycle
$\si_Q(f,g) =  \int_Q gdf\wedge(d\th)^{n-1}/(n-1)!$ of  \eqref{sg2}.}
For general contact manifolds, we have to impose the condition
of exactness so that we can make use of Theorem \ref{lint}.
However,  
in the important special case of regular contact manifolds,
we will show that
the exactness condition is automatically satisfied.

Analogous to the group $\Diff_{\ex}(P,\th) := \Diff(P,\th)_{0} \cap \Diff_{\ex}(P,\mu)$
of exact strict contact transformations, we define the Lie algebra of
{\it exact strict contact vector fields} by
$$\X_{\ex}(P,\th):=\X(P,\th) \cap \X_{\ex}(P,\mu)\,.$$

For every isotopy $\ph_t\in\Diff(P)$, starting at the identity
and determined by the time dependent vector field $X_t\in\X(P)$,
we have $\ph_t\in \Diff_{\ex}(P,\th)$ for all $t$ if and only if 
$X_t\in\X_{\ex}(P,\th)$ for all $t$.


\begin{Lemma}
The function space
\[
C^{\infty}_{0}(P)^{E} := \{f \in C^{\infty}(P)^{E}\,;\,  f(d \th)^n \in d\Omega^{2n-1}(P)\}
\]
is a Lie subalgebra of $C^\oo(P)^E$ of finite codimension
$\le\dim H^{2n}(P,\R)$,
isomorphic under $f\mapsto\ze_f$  to 
the Lie algebra $\X_{\ex}(P,\th)$
of  exact strict contact vector fields.
\end{Lemma}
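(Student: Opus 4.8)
The plan is to establish three things: (i) that $C^\infty_0(P)^E$ is exactly the preimage of $\X_\ex(P,\th)$ under the isomorphism $\ze$, (ii) that it is a Lie subalgebra, and (iii) that it has finite codimension bounded by $\dim H^{2n}(P,\R)$. For (i), recall that $X_{\ze_f} = \ze_f$ is strict contact by construction, and that $\ze_f$ is exact divergence-free with respect to $\mu$ precisely when $i_{\ze_f}\mu \in d\Omega^{2n-1}(P)$. The key computation is to expand $i_{\ze_f}\mu = i_{\ze_f}\bigl(\th\wedge(d\th)^n/(n+1)!\bigr)$. Since $i_{\ze_f}\th = f$ and $i_{\ze_f}d\th = -df$, one gets $i_{\ze_f}\mu = \bigl(f(d\th)^n - \th\wedge df\wedge(d\th)^{n-1}\bigr)/(n+1)!$ up to the combinatorial factors (this is the same manipulation as the first line of the proof of Proposition~\ref{trekterug}). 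Now $\th\wedge df\wedge(d\th)^{n-1} = \th\wedge d\bigl(f(d\th)^{n-1}\bigr) = -d\bigl(f\th\wedge(d\th)^{n-1}\bigr) + f(d\th)^n$, so modulo exact forms $i_{\ze_f}\mu$ is a constant multiple of $f(d\th)^n$. Hence $i_{\ze_f}\mu \in d\Omega^{2n-1}(P)$ if and only if $f(d\th)^n \in d\Omega^{2n-1}(P)$, which is the defining condition of $C^\infty_0(P)^E$. This proves (i), and in particular identifies $C^\infty_0(P)^E$ with $\X_\ex(P,\th)$ as vector spaces; since $\ze$ is already known to be a Lie algebra isomorphism onto $\X(P,\th)$ and $\X_\ex(P,\th)$ is a subalgebra of $\X(P,\mu)$ (being the intersection of two subalgebras), assertion (ii) follows immediately — $C^\infty_0(P)^E$ is the $\ze$-preimage of a subalgebra, hence a subalgebra.

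For the codimension bound (iii), consider the linear map $\Phi\colon C^\infty(P)^E \to H^{2n}(P,\R)$ sending $f$ to the class $[f(d\th)^n]$; this is well defined because $f(d\th)^n$ is a top-degree, hence closed, form on the $2n+1$-dimensional $P$ — wait, $(d\th)^n$ has degree $2n$, so $f(d\th)^n$ is a $2n$-form, which is closed iff $d(f(d\th)^n) = df\wedge(d\th)^n = 0$. This need not hold for arbitrary $f \in C^\infty(P)^E$, so I must be more careful: the correct statement is that $f \mapsto df\wedge(d\th)^n$ need not vanish, but one checks that for $f \in C^\infty(P)^E$ one has $L_E f = 0$, and then $df\wedge(d\th)^n$ is a $(2n+1)$-form, i.e.\ a top form on $P$, with $\int_P df\wedge(d\th)^n = 0$ by Stokes. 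So instead I will use the map $\Psi\colon C^\infty(P)^E \to H^{2n}(P,\R)$ defined only on the subspace where $df\wedge(d\th)^n=0$; this is too restrictive. The clean fix: define $\Phi(f) := [\,i_{\ze_f}\mu\,] \in H^{2n}(P,\R)$, noting that $i_{\ze_f}\mu$ \emph{is} closed because $d\,i_{\ze_f}\mu = L_{\ze_f}\mu = 0$ as $\ze_f$ preserves the volume form $\mu$ (being strict contact). Then $\Phi$ is linear, its kernel is exactly $C^\infty_0(P)^E$ by (i), so $C^\infty(P)^E/C^\infty_0(P)^E$ embeds into $H^{2n}(P,\R)$, giving codimension $\le \dim H^{2n}(P,\R)$.

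The step I expect to be the main obstacle is getting the exact-form manipulation in (i) right with all the factorial normalizations, and in particular confirming that the relevant form $i_{\ze_f}\mu$ differs from a constant multiple of $f(d\th)^n$ by an \emph{exact} form rather than merely a closed one — this is what makes the kernel of $\Phi$ coincide on the nose with $C^\infty_0(P)^E$ as defined via $f(d\th)^n \in d\Omega^{2n-1}(P)$, and not with some larger space defined via cohomology classes. The computation above ($\th\wedge df\wedge(d\th)^{n-1} = -d(f\th\wedge(d\th)^{n-1}) + f(d\th)^n$, using $d\th\wedge(d\th)^{n-1} = (d\th)^n$ and $\th\wedge(d\th)^n = 0$ by degree if $P$ has dimension $2n+1$ — careful, $\th\wedge(d\th)^n$ has degree $2n+1 = \dim P$ and is the volume form, so it is \emph{not} zero; the cancellation instead comes from $d(f\th\wedge(d\th)^{n-1}) = df\wedge\th\wedge(d\th)^{n-1} + f(d\th)^n$) needs to be carried out cleanly, but it is elementary once set up. Everything else is then formal.
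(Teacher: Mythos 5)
Your proof is correct and follows essentially the same route as the paper: the paper defines the linear map $f \mapsto \frac{1}{n!}[f(d\th)^n]$ on $C^\infty(P)^E$ and identifies it with the flux homomorphism for $\mu$ via exactly your computation, so your $\Phi(f) = [\,i_{\ze_f}\mu\,]$ is the same map with the same kernel. Your worry about the closedness of $f(d\th)^n$ (which the paper asserts without comment) has a direct resolution you could have used instead of the detour: $df\wedge(d\th)^n$ is a top-degree form annihilated by $i_E$ (since $i_E df = L_E f = 0$ and $i_E d\th = 0$), and the Reeb field $E$ is nowhere vanishing, so this form is identically zero.
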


\begin{proof}
As $f(d\th)^n$ is closed for all $f \in C^{\infty}(P)^{E}$, we can define the
linear map 
\begin{equation}\label{liho}
C^{\infty}(P)^{E} \rightarrow H^{2n}(P,\R), 
\quad f \mapsto \frac{1}{n!}[f (d\th)^n]
\end{equation}
with kernel  $C^{\infty}_{0}(P)^{E}$.
This is a Lie algebra homomorphism, as 
$\{f,g\}(d\th)^n =ndf\wedge dg\wedge (d\th)^{n-1}$ is exact. 
It coincides, under the identification \eqref{zet}, with the flux homomorphism \eqref{flu} 
for the volume form $\mu$
restricted to  $\X(P,\th)\simeq C^{\infty}(P)^{E}$, as
\begin{eqnarray*}
(n+1)! [i_{\ze_f}\mu]&= &[i_{\ze_f}(\th\wedge (d\th)^n)]\\
&=& [f(d\th)^n- n (df)\wedge\th\wedge(d\th)^{n-1}]\\
&=& [f (d\th)^n- n d(f\th\wedge(d\th)^{n-1}) + n f (d\th)^n]\\
&=& (n+1)[f (d\th)^n]\,.
\end{eqnarray*}
It follows that the kernel $C_0^\oo(P)^E$ of \eqref{liho} 
is identified under $\ze$ with the exact strict contact vector fields 
$\X(P,\th)\cap\X_{\ex}(P,\mu)=\X_{\ex}(P,\th)$.
\end{proof}

\begin{Proposition}\label{LAsame}
If the contact manifold $(P,\th)$ is regular, i.e.\ the total space of a prequantum bundle $\pi:P\to M$,  
then the Lie algebras of strict contact and exact strict contact vector fields coincide:
$\X(P,\th)=\X_{\ex}(P,\th)$.
Moreover,  the group of exact strict contact diffeomorphisms
is precisely the connected component of the 
quantomorphism group: 
\[ \Diff_{\ex}(P,\th)=\Diff(P,\th)_{0} \,.\]
\end{Proposition}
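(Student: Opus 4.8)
The plan is to prove the two assertions in order, deriving the group statement from the Lie algebra statement. For the Lie algebra claim $\X(P,\th)=\X_{\ex}(P,\th)$, by the Lemma it suffices to show $C^\oo(P)^E = C_0^\oo(P)^E$, i.e.\ that the flux homomorphism \eqref{liho}, $f\mapsto \frac{1}{n!}[f(d\th)^n]\in H^{2n}(P,\R)$, vanishes identically on $C^\oo(P)^E$. In the regular case $d\th = \pi^*\omega$, so $(d\th)^n = \pi^*\omega^n$, and for $f\in C^\oo(P)^E = \pi^*C^\oo(M)$ we have $f = \pi^*\bar f$ for a unique $\bar f\in C^\oo(M)$; thus $f(d\th)^n = \pi^*(\bar f\,\omega^n)$. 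The first thing I would do is observe that the fiber of $\pi$ is a circle, so $H^{2n}(P,\R)$ is computed by the Gysin sequence, and more directly that $\pi^*\colon H^{2n}(M,\R)\to H^{2n}(P,\R)$ is zero: the class $\pi^*[\bar f\,\omega^n]$ is a pullback of a top-degree class on $M$, and pulling back along the circle bundle $P\to M$ kills $H^{2n}(M,\R)$ because $\omega^n$ itself, being a nonzero multiple of a volume form with $[\omega]$ integral, has $\pi^*[\omega]\cdot(\text{something})$ — more cleanly: $\pi^*\omega = d\th$ is exact on $P$, hence $\pi^*\omega^n = (d\th)^n = d(\th\wedge(d\th)^{n-1})$ is exact, so $[f(d\th)^n]=[\pi^*(\bar f\omega^n)]=0$ only when $\bar f$ is constant — wait, that argument only handles constants. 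The correct route is: $f(d\th)^n$ where $d\th=\pi^*\omega$, and I claim $[\,g\cdot(d\th)^n\,]=0$ in $H^{2n}(P,\R)$ for every $g\in C^\oo(P)^E$. Indeed $g(d\th)^n = g\wedge\th\wedge\dots$ no — instead use that $\frac{1}{n!}[g(d\th)^n]$ is, by the computation in the Lemma's proof, equal to $(n+1)[i_{\ze_g}\mu] = (n+1)\,[i_{\ze_g}\mu]$, and $i_{\ze_g}\mu$ is, up to the normalization, the flux of $\ze_g$; since $(d\th)^n$ is exact on $P$, $g(d\th)^n$ differs from $d(g\,\th\wedge(d\th)^{n-1})/1$ by $dg\wedge\th\wedge(d\th)^{n-1}$... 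The cleanest argument: on a Boothby--Wang bundle the top cohomology $H^{2n+1}(P,\R)$ is one-dimensional and $H^{2n}(P,\R)\cong H^{2n}(M,\R)/\mathrm{im}(\cdot\cup[\omega])\oplus\Ker(\cdot\cup[\omega]\text{ on }H^{2n-1})$ via Gysin; but since $[\omega]^{n}\neq 0$ while we only need that $\pi^*$ annihilates $H^{2n}(M)$ — which holds because in the Gysin sequence $H^{2n-2}(M)\xrightarrow{\cup[\omega]}H^{2n}(M)\xrightarrow{\pi^*}H^{2n}(P)$, and $\cup[\omega]\colon H^{2n-2}(M)\to H^{2n}(M)\cong\R$ is surjective (it sends $[\omega]^{n-1}$ to $[\omega]^n\neq 0$). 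Hence $\pi^*\colon H^{2n}(M,\R)\to H^{2n}(P,\R)$ is the zero map, so $[f(d\th)^n]=\pi^*[\bar f\omega^n]=0$ for all $f\in C^\oo(P)^E$. Therefore \eqref{liho} is identically zero, $C^\oo(P)^E=C_0^\oo(P)^E$, and $\X(P,\th)=\X_{\ex}(P,\th)$.

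For the group statement, the inclusion $\Diff_{\ex}(P,\th)\subseteq\Diff(P,\th)_0$ is immediate from the definition $\Diff_{\ex}(P,\th)=\Diff(P,\th)_0\cap\Diff_{\ex}(P,\mu)$. For the reverse inclusion, I would take $\ph\in\Diff(P,\th)_0$ and choose a strict-contact isotopy $\ph_t$ from $\id$ to $\ph$, with time-dependent generator $X_t\in\X(P,\th)$ (this is possible since $\Diff(P,\th)_0$ is the identity component of the quantomorphism group, a Fréchet/ILH Lie group by \cite{Om74,RS}, so path-connected by smooth paths whose velocities lie in the Lie algebra). By the first part, $X_t\in\X_{\ex}(P,\th)\subseteq\X_{\ex}(P,\mu)$ for every $t$. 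Now invoke Thurston's flux: $\Flux(\ph)=\int_0^1[i_{X_t}\nu]\,dt\bmod\Ga$, and since each $[i_{X_t}\nu]=0$ (exactness of $X_t$ for the volume form $\nu$, which is a positive multiple of $\mu$), we get $\Flux(\ph)=0$, i.e.\ $\ph\in\Ker\Flux=\Diff_{\ex}(P,\mu)$. Combined with $\ph\in\Diff(P,\th)_0$ this gives $\ph\in\Diff_{\ex}(P,\th)$, as desired.

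The main obstacle, and the step deserving the most care, is the vanishing of the flux homomorphism \eqref{liho} on $C^\oo(P)^E$ — equivalently the fact that $\pi^*\colon H^{2n}(M,\R)\to H^{2n}(P,\R)$ is zero. One must make sure the Gysin-sequence argument is applied with the correct degrees and that the Euler class of the Boothby--Wang bundle is indeed (a multiple of) $[\omega]$, so that the connecting map $H^{2n-2}(M,\R)\to H^{2n}(M,\R)$ is cup product with $[\omega]$; surjectivity of that map onto the one-dimensional space $H^{2n}(M,\R)$ then follows from $[\omega]^n\neq 0$ (as $\omega^n$ is a volume form). A minor additional point is to confirm that the passage from "the generators $X_t$ are exact" to "$\Flux(\ph)=0$" uses only the well-definedness of $\widetilde{\Flux}$ already cited from \cite{B} and the normalization $\nu = \mu/\mathrm{vol}_\mu(P)$, so that $\X_{\ex}(P,\mu)=\X_{\ex}(P,\nu)$; this is routine.
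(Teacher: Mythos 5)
Your proof is correct and follows essentially the same route as the paper: the heart of the matter is that $f(d\th)^n=\pi^*(\bar f\,\omega^n)$ is exact on $P$, which the paper shows by the explicit decomposition $\bar f=\bar f_0+c$ (with $\bar f_0\omega^n$ already exact on $M$ and $c(d\th)^n=c\,d(\th\wedge(d\th)^{n-1})$), while you obtain the equivalent statement $\pi^*=0$ on $H^{2n}(M,\R)$ from the Gysin sequence; the group statement then follows in both cases from the isotopy characterization of $\Diff_{\ex}(P,\th)$. The several abandoned attempts in your first paragraph should be deleted, but the argument you ultimately settle on is sound.
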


\begin{proof}
Any $f\in C^\oo(P)^E$ 
is of the form $\pi^*\bar f$ for a smooth function $\bar f$ on the compact symplectic manifold $M$.
If we write $\bar f =\bar f_0 + c$ with $\int_{M} \bar{f}_{0} \omega^n = 0$, then
$\bar f_0 \omega^n = d\gamma$ is exact, so that also
 $f (d\th)^n=c(d\th)^n+\pi^*(\bar f_0\om^n) = c d(\th \wedge (d\th)^{n-1}) + d\pi^*\gamma$ is exact.
Hence $C_0^\oo(P)^E=C^\oo(P)^E\simeq C^\oo(M)$ and the conclusion follows.
\end{proof}

The following example shows that for contact manifolds that are not regular, 
the Lie algebra 
$C_{0}^{\infty}(P)^{E}$ can be strictly smaller than $C^{\infty}(P)^{E}$.

\begin{Example} 
An example of a non-regular contact form on the 3-torus $P=\T^3$
is $\th=\cos z dx+\sin z dy$.
The orbits of the Reeb vector field $E=\cos z \pa_x+\sin z \pa_y$
determine constant slope foliations 
on each 2-torus of constant $z$.
We show that $C_0^\oo(P)^E \simeq \X_{\ex}(P,\th)$
has codimension two in $C^\oo(P)^E\simeq\X(P,\th)$.

We use the inclusion $\X(P,\th)\subset\X(P,\mu)$.
Any divergence free vector field $X\in\X(P,\mu)$ is the sum $X = X_0 + X_{\alpha}$
of an exact divergence free vector field
$X_\al$
with potential 1-form $\al = Adx + Bdy + Cdz$ and 
a constant vector field $X_0 = a \partial_x + b \partial_y + c \partial_z$.
With volume form $\mu =\half\th\wedge d\th= -\half dx\wedge dy\wedge dz$, we have 
$X_\al=2(B_z-C_y)\pa_x+2(C_x-A_z)\pa_y+2(A_y-B_x)\pa_z$.
The vector field $X$ is  strict contact if $L_{X}\th = 0$, 
which amounts to
\begin{eqnarray*}
\sin z(C_{xx}-A_{xz}-A_y+B_x-c)+\cos z(B_{xz}-C_{xy})&=&0\\
\cos z(B_{yz}-C_{yy}+A_y-B_x+c)+\sin z(C_{xy}-A_{yz})&=&0\\
\cos z (B_{zz} - C_{yz}) + \sin z (C_{xz} - A_{zz}) &=& 0.
\end{eqnarray*}
Thus $a, b \in \R$ are arbitrary, $c=0$ (as can be seen by integrating the above equations 
over $x$ and $y$), and $X_\al$ is {a strict contact vector field.} 
We find {that} $\X(P,\th)$ is isomorphic to the semidirect product
$\mathrm{Span}\{\pa_x, \pa_y\} \ltimes \X_{\rm ex}(P,\th)$, and {that}
the flux homomorphism \eqref{flu} restricted to $\X(P,\th)$
has 2-dimensional  image generated by $[dy\wedge dz], [dx\wedge dz]\in H^2(P,\R)$.
\end{Example}


We apply Theorem \ref{lint} 
to the contact manifold $P$ with integral volume form $k\nu=\frac{k}{\mathrm{vol}_{\mu}(P)}\mu$ for $k \in \Z$
and we obtain the following central result.

\begin{Theorem}\label{centralresult}
Let $G$ be a Lie group acting {smoothly} on $(P,\th)$ by exact strict contact transformations.
Then the restriction to $\g$ of the class 
$\frac{k}{\mathrm{vol}_{\mu}(P)} [\si_{Q}]$
is integrable to a central Lie group extension of $G$.
\end{Theorem}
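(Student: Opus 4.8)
The plan is to realize the required central extension of $G$ as the pullback, along the $G$-action, of the prequantization central extension over a suitable component of a non-linear Grassmannian of $P$ --- the same construction that underlies Theorem \ref{lint}, but carried out for $G$ in place of $\Diff_{\ex}(P,\nu)$ --- and then to identify the resulting Lie algebra cocycle by means of Proposition \ref{trekterug}. Throughout, $\dim P = m = 2n+1$, so codimension two submanifolds of $P$ lie in $\OGr_{2n-1}(P)$; fix $k \in \Z$. As $\mathrm{vol}_{\nu}(P)=1$, the class $[k\nu] \in H^{2n+1}(P,\R)$ is integral, so Theorem \ref{taual} applies: the connected component $\M \subseteq \OGr_{2n-1}(P)$ containing $Q$ is prequantizable for the symplectic form $\Om = k\,\tau\nu$.

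First I would check that the $G$-action on $P$ induces a Hamiltonian $G$-action on $(\M,\Om)$. The smooth $G$-action on $P$ yields a smooth $G$-action on $\OGr_{2n-1}(P)$; since every exact strict contact transformation lies in the connected group $\Diff(P,\th)_0$, the image of $G$ preserves each connected component, in particular $\M$. As $G$ preserves $\nu$, it preserves $\tau\nu$ by \eqref{fun}, so the action on $(\M,\Om)$ is symplectic. It is moreover Hamiltonian: for $\xi \in \g$, the corresponding strict contact vector field $X_\xi = \ze_{\iota(\xi)}$ lies in $\X_{\ex}(P,\th) \subseteq \X_{\ex}(P,\mu)$ because the action is by \emph{exact} strict contactomorphisms, say $i_{X_\xi}\nu = d\al_\xi$, and then the fundamental vector field on $\M$ satisfies $i_{\xi_\M}\tau\nu = i_{\ta_{X_\xi}}\tau\nu = \tau i_{X_\xi}\nu = \tau d\al_\xi = d\,\tau\al_\xi$, just as for the $\Diff_{\ex}(M,\nu)$-action treated earlier. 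Applying Theorem \ref{pullb} to this Hamiltonian $G$-action yields a central Lie group extension $S^1 \to \widehat G \to G$. (If $G$ is disconnected one applies this to $G_0$; in any case $\widehat G$ is the pullback of the principal $S^1$-bundle underlying the prequantization extension of $\M$, hence a Lie group.)

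Next I would compute the Lie algebra cocycle of $\widehat G$. By \eqref{splittingCE} with base point $x_0 = Q$, together with \eqref{lich} and $\xi_\M = \ta_{X_\xi}$, it equals $\si(\xi,\et) = k\,\la^{\nu}_{Q}(X_\xi,X_\et) = k\,(\ze^*\la^{\nu}_{Q})(\iota(\xi),\iota(\et))$, i.e.\ the pullback along $\iota\colon\g\hookrightarrow C^\infty(P)^E$ of the cocycle $\ze^*(k\la^{\nu}_{Q})$. Since $\nu = \mathrm{vol}_{\mu}(P)^{-1}\mu$ and $\la_Q$ depends linearly on the volume form, $\ze^*(k\la^{\nu}_{Q}) = \tfrac{k}{\mathrm{vol}_{\mu}(P)}\,\ze^*\la^{\mu}_{Q}$, which by Proposition \ref{trekterug} equals
\[
\frac{k}{\mathrm{vol}_{\mu}(P)}\Big(\si_Q + \frac{1}{n+1}\,\delta\rho_Q\Big).
\]
Pulling back along $\iota$ and passing to cohomology removes the coboundary term, so the extension $\widehat G$ represents the class $\frac{k}{\mathrm{vol}_{\mu}(P)}\,\iota^*[\si_Q] \in H^2(\g,\R)$. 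As cohomologous $2$-cocycles define isomorphic central extensions, integrability depends only on the cohomology class, and hence $\frac{k}{\mathrm{vol}_{\mu}(P)}\,\iota^*[\si_Q]$ is integrable, as claimed.

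I do not expect any single step to present a genuine difficulty, since the analytic heart of the matter --- prequantizability of the non-linear Grassmannian and the Lie group structure of the pullback --- is already supplied by Theorems \ref{taual} and \ref{pullb}. The points needing care are the verification that the image of $G$ preserves the chosen component $\M$ (handled by connectedness of $\Diff(P,\th)_0$) and the bookkeeping of scalar factors and coboundaries relating $\la^{\nu}_{Q}$, $\la^{\mu}_{Q}$ and $\si_Q$, where Proposition \ref{trekterug} does the essential work.
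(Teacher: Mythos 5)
Your proposal is correct and follows essentially the same route as the paper: a Hamiltonian action of $G$ on a connected component of $\Gr_{2n-1}(P)$ with symplectic form $k\,\tau\nu$, Theorem \ref{pullb} to produce the group extension, and Proposition \ref{trekterug} to identify the resulting Lie algebra class as $\frac{k}{\mathrm{vol}_{\mu}(P)}\,\iota^*[\si_Q]$ after discarding the coboundary $\delta\rho_Q$. The paper's proof is terser (it cites Theorem \ref{lint} for the identification of the class rather than redoing the splitting computation), but the content is identical.
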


\begin{proof}
Since $G \subseteq \Diff_{\ex}(P,\th)$ and $\Diff_{\ex}(P,\th) \subseteq \Diff_{\ex}(P,k\nu)$,
the action of $G$ on the connected component $\M$ of $\mathrm{Gr}_{2n-1}(P)$
is Hamiltonian.
Theorem~\ref{pullb} then yields a 
Lie group extension 
\[S^1 \rightarrow \widehat{G} \rightarrow G\,.\]
By Theorem~\ref{lint}, the corresponding class in $H^2(\g,\R)$ 
is the pullback along the inclusion $\iota \colon \g \hookrightarrow \X_{\mathrm{ex}}(M,\nu)$ 
of 
$[\lambda^{k\nu}_{Q}] = \frac{k}{\mathrm{vol}_{\mu}(P)}[\lambda^{\mu}_{Q}]$. 
By Proposition~\ref{trekterug}, this is the restriction to $\g$ of 
the class 
\begin{equation*}\label{integrclass}
	\frac{k}{\mathrm{vol}_{\mu}(P)} [\si_{Q}]\in H^2(C_0^{\infty}(P)^{E}, \R)\,, 
\end{equation*}
where $\sigma_{Q}$ is the the cocycle
$\si_Q(f,g) =  \int_Q gdf\wedge(d\th)^{n-1}/(n-1)!$ of  \eqref{sg2}.
\end{proof}

\section{The quantomorphism group}

Let $P \rightarrow M$ be a prequantum bundle over a 
compact symplectic manifold $(M,\omega)$, and let $\th \in \Omega^1(P)$
be a connection 1-form with curvature $\omega$.
We apply Theorem~\ref{centralresult} to the 
identity component $G = \Quant(P,\th)_0$
of the quantomorphism group.
As its Lie algebra is isomorphic to the Poisson Lie algebra $\g = C^\oo(M)$,
an explicit description of the second Lie algebra cohomology 
is available \cite{JV}.
With the above construction,
we obtain a lattice of integrable classes in
$H^2(C^{\infty}(M),\R)$.

\subsection{Cohomology of the Poisson Lie algebra}

We describe the second Lie algebra cohomology $H^2(C^{\infty}(M), \R)$ 
in two different ways:
using \emph{Roger cocycles} related to $H^1(M,\R)$,
and 
using \emph{singular cocycles} related to $H_{2n-1}(M,\R)$.
The two pictures are linked by Poincar\'e duality.

\begin{Definition}
The \emph{Roger cocycle} \cite[\S 9]{R95} associated to a closed 1-form $\alpha$
on the $2n$-dimensional symplectic manifold $(M,\om)$
is defined by  
\begin{equation}
\label{rog}
\psi_\al(f,g):=\int_M f\al(X_g)\om^n/n!
=-\int_M\al\wedge fdg\wedge \om^{n-1}/(n-1)!.
\end{equation}
It was first defined for surfaces in \cite{Ki90}. 
\end{Definition}

The Roger cocycles link the first de Rham cohomology 
of $M$ to the second Lie algebra cohomology of the Poisson Lie algebra $C^{\infty}(M)$.

\begin{Theorem}\label{LAcohomology}{\rm\cite[\S 9]{R95}\cite[\S4]{JV}}
The Roger cocycles $\psi_{\alpha}$ and $\psi_{\alpha'}$ are cohomologous 
if and only if $\alpha - \alpha'$ is exact, and 
the corresponding map $[\alpha] \mapsto [\psi_{\alpha}]$ is an isomorphism
\[
	H^1(M,\R) \stackrel{\!\sim\,}{\rightarrow} H^2(C^{\infty}(M), \R)\,.
\]
\end{Theorem}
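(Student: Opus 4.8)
\textbf{Proof proposal for Theorem \ref{LAcohomology}.}

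The plan is to decompose the statement into three parts and attack them in order. First, I would show the Roger cocycle $\psi_\al$ is indeed a 2-cocycle on $C^\oo(M)$; this is a direct but instructive computation. One writes $\delta\psi_\al(f,g,h) = \sum \psi_\al(\{f,g\},h) + \text{cyc}$, and uses the Jacobi identity for $\{\cdot,\cdot\}$ together with the Leibniz rule $X_{\{f,g\}} = [X_f,X_g]$ and the fact that $\al$ is closed. The cleanest route is to exploit the second formula in \eqref{rog}, writing $\psi_\al(f,g) = -\int_M \al \wedge f\,dg\wedge \om^{n-1}/(n-1)!$, and to integrate by parts: since $M$ is closed, terms of the form $\int_M d(\cdots)$ drop out, and the antisymmetrization over $f,g,h$ kills the remaining pieces.

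Second, for the relation ``$\psi_\al \sim \psi_{\al'}$ iff $\al - \al'$ is exact,'' I would argue both directions. If $\al - \al' = dk$ for $k \in C^\oo(M)$, then $\psi_{dk}(f,g) = \int_M f\,(dk)(X_g)\,\om^n/n! = -\int_M f\{k,g\}\om^n/n!$, and one checks this equals $-\delta\beta_k(f,g)$ for the 1-cochain $\beta_k(h) := \int_M hk\,\om^n/n!$ (up to sign conventions), using that $\int_M \{f,g\}\,\om^n = 0$; hence $\psi_{dk}$ is a coboundary. Conversely, if $\psi_\al$ is a coboundary $\delta\beta$ for a continuous linear functional $\beta$ on $C^\oo(M)$, then by a standard argument $\beta$ is given by integration against a density, i.e.\ $\beta(h) = \int_M h\rho\,\om^n/n!$ for some $\rho$ — here one either invokes the structure of continuous functionals on $C^\oo(M)$ as distributions and uses that $\delta\beta$ is local/first-order in each slot to conclude $\rho$ is smooth, or one simply cites \cite{JV} — and then $\delta\beta = \psi_{d\rho}$, forcing $[\al] = [d\rho] = 0$ in $H^1(M,\R)$. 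This shows the map $[\al]\mapsto[\psi_\al]$ is well defined and injective.

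Third, for surjectivity I would rely on the explicit computation of $H^2(C^\oo(M),\R)$ from \cite{JV}, which (combined with the theorem $H^2(C^\oo(M),\R)\simeq H^1(M,\R)$ quoted in the introduction) tells us that the target has dimension $\dim H^1(M,\R)$. Since we have produced an injective linear map $H^1(M,\R)\hookrightarrow H^2(C^\oo(M),\R)$ between spaces of equal (finite) dimension, it is automatically an isomorphism. Alternatively, and more self-containedly, surjectivity follows from the identification of $H^2(C^\oo(M),\R)$ with singular cocycles indexed by $H_{2n-1}(M,\R)$ together with Poincar\'e duality: a class $[C]\in H_{2n-1}(M,\R)$ Poincar\'e dual to $[\al]\in H^1(M,\R)$ gives $\psi_C(f,g) = \int_C g\,df\wedge\om^{n-1}/(n-1)!$, which is cohomologous to $-\psi_\al$ — this is the computation underlying Remark \ref{BWremark}, and transcribing it here closes the loop.

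The main obstacle I anticipate is the converse direction in the second step: proving that \emph{every} continuous 2-coboundary on $C^\oo(M)$ arises from a \emph{smooth} density, so that $[\al]=0$ follows. This is where the locally convex topology on $C^\oo(M)$ genuinely enters — one must rule out exotic continuous linear functionals — and it is precisely the point where citing the detailed cohomology computation of \cite{JV} is the most economical move rather than reproving the regularity statement from scratch.
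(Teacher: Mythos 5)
This theorem is imported by the paper from \cite[\S 9]{R95} and \cite[\S 4]{JV} with no internal proof, so there is nothing to compare against line by line; your sketch actually does more than the paper does. The elementary halves of your argument are correct: the cocycle identity via $d\alpha=0$ and Stokes, and the computation showing $\psi_{dk}=\pm\delta\beta_k$ with $\beta_k(h)=\int_M hk\,\omega^n/n!$ (using $\int_M\{u,v\}\,\omega^n=0$ and the Leibniz rule) are exactly the standard verifications, and the signs do work out with the paper's conventions $i_{X_f}\Omega=-df$, $\delta\beta(f,g)=-\beta(\{f,g\})$. You also correctly isolate the two points where the real content lies: (a) the converse direction, i.e.\ that a \emph{continuous} $1$-cochain $\beta$ with $\delta\beta=\psi_\alpha$ must be integration against a smooth density, so that $[\alpha]=0$; and (b) surjectivity, i.e.\ that Roger cocycles exhaust $H^2(C^\infty(M),\R)$. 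Both are precisely the theorems of \cite{R95,JV} being quoted, and your proposal defers both to \cite{JV} --- legitimate here, but be aware that your ``dimension count'' route to surjectivity is circular unless you treat the computation $\dim H^2(C^\infty(M),\R)=\dim H^1(M,\R)$ as a black box, since that computation is \emph{obtained} by proving exhaustiveness of the Roger (equivalently singular) cocycles; the locality/regularity argument for (a) is likewise not a routine fact about distributions and should not be presented as one. One small slip: with the paper's conventions $\psi_C$ is cohomologous to $+\psi_\alpha$, not $-\psi_\alpha$, when $[C]$ is Poincar\'e dual to $[\alpha]$ (Proposition \ref{cohthm}); indeed $g\,df=d(fg)-f\,dg$ and $\int_M\alpha\wedge d(fg)\wedge\omega^{n-1}=0$ convert \eqref{sg22} into \eqref{rog} with the stated sign. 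This is harmless for the isomorphism statement.
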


This shows (\cf \cite[\S2]{JV}) that every (locally convex) central extension 
$\R \rightarrow \widehat{\g} \rightarrow C^{\infty}(M)$
corresponds to a Roger cocycle \eqref{rog} with respect to some linear splitting
$C^{\infty}(M) \rightarrow \widehat{\g}$.
However, the cocycles that come from the Hamiltonian action 
of $\Quant(P,\th)$ on $\Gr_{2n-1}(P)$, using splittings of type \eqref{splitting},  are 
more closely related to \emph{singular} homology.  

\begin{Definition}\label{defpsiC}
The \emph{singular cocycle} $\psi_{C}$ on $C^{\infty}(M)$, associated to a singular $(2n-1)$-cycle $C$ on $M$, is defined by
\begin{equation}\label{sg22}
\psi_C(f,g):=
  \int_C gdf\wedge\om^{n-1}/(n-1)!\,.
\end{equation}
\end{Definition}
The Lie algebra 2-cocycles $\psi_{C}$ and $\psi_{C' }$ are cohomologous 
if and only if $C - C'$ is a boundary. 
\begin{Proposition}\label{cohthm}
The singular cocycle $\psi_{C}$ is cohomologous to the Roger cocycle $\psi_{\alpha}$
if and only if $[C] \in H_{2n-1}(M,\R)$ is Poincar\'e dual to $ [\alpha]\in H^1(M,\R)$. 
In particular, the map 
$[C] \mapsto [\psi_{C}]$ is an isomorphism
\begin{equation}\label{FullLACohomology}
H_{2n-1}(M,\R) \stackrel{\!\sim\,}{\rightarrow} H^2(C^{\infty}(M),\R)\,.
\end{equation}
\end{Proposition}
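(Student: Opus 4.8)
The plan is to reduce Proposition~\ref{cohthm} to Theorem~\ref{LAcohomology} by comparing the singular cocycle $\psi_C$ with the Roger cocycle $\psi_\alpha$ whenever $[C]$ and $[\alpha]$ are Poincar\'e dual. First I would fix a closed $1$-form $\alpha$ on $M$ and a singular $(2n-1)$-cycle $C$ that are Poincar\'e dual, meaning $\int_C \gamma = \int_M \alpha \wedge \gamma$ for every closed $\gamma \in \Omega^{2n-1}(M)$. The key computation is to rewrite $\psi_C(f,g) = \int_C gdf\wedge \om^{n-1}/(n-1)!$: the form $gdf \wedge \om^{n-1}/(n-1)!$ is not closed, but its exterior derivative is $dg\wedge df\wedge\om^{n-1}/(n-1)! = -\{f,g\}\om^n/n!$ (up to sign conventions), and $\{f,g\}\om^n/n!$ is exact since its integral over $M$ vanishes. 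Writing $gdf\wedge\om^{n-1}/(n-1)! = \beta_{f,g} + d\eta_{f,g}$ with $\beta_{f,g}$ closed would then allow me to pair $C$ against $\beta_{f,g}$ via Poincar\'e duality and recover an integral over $M$ against $\alpha$.

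More precisely, I would argue as follows. Over $M$ one has the identity (an instance of the expansion used in the proof of Proposition~\ref{trekterug})
\begin{equation*}
gdf\wedge\om^{n-1}/(n-1)! = \half(fdg - gdf)\wedge\om^{n-1}/(n-1)! - \half d\bigl((fg)\om^{n-1}/(n-1)!\bigr) + \tfrac12\{f,g\}\,\text{(lower order, but here we are on $M$ so no $\th$ term)}.
\end{equation*}
I should be careful here: on $M$ there is no contact form, so the clean statement is simply $gdf = \half(fdg - gdf) - \half d(fg)$, hence $gdf\wedge\om^{n-1}/(n-1)! = \half(fdg-gdf)\wedge\om^{n-1}/(n-1)! - \half d\bigl(fg\,\om^{n-1}/(n-1)!\bigr)$. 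Since $C$ is a cycle, the exact term integrates to zero, so $\psi_C(f,g) = \half\int_C (fdg - gdf)\wedge\om^{n-1}/(n-1)!$. Now $(fdg - gdf)\wedge\om^{n-1}/(n-1)!$ still need not be closed; its differential is $2\,df\wedge dg\wedge\om^{n-1}/(n-1)!$, which is exact on $M$ (integral zero), so I can subtract a further primitive without changing the pairing with $C$. This reduces $\psi_C$ to the integral over $C$ of an honestly closed $(2n-1)$-form whose class I then identify, via Poincar\'e duality, with $\int_M \alpha\wedge(\cdot)$. Matching the resulting expression against the second formula for $\psi_\alpha$ in \eqref{rog}, namely $\psi_\alpha(f,g) = -\int_M \alpha\wedge fdg\wedge\om^{n-1}/(n-1)!$, shows $\psi_C - \psi_\alpha$ is a coboundary.

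For the converse direction and the isomorphism statement, I would combine this with Theorem~\ref{LAcohomology}: the map $[\alpha]\mapsto[\psi_\alpha]$ is an isomorphism $H^1(M,\R)\xrightarrow{\sim} H^2(C^\infty(M),\R)$, and Poincar\'e duality $H_{2n-1}(M,\R)\xrightarrow{\sim} H^1(M,\R)$ is an isomorphism, so the composite $[C]\mapsto[\psi_C]$ is an isomorphism provided it genuinely agrees with this composite — which is exactly what the cohomologousness criterion above establishes. The statement that $\psi_C$ and $\psi_{C'}$ are cohomologous iff $C - C'$ is a boundary then follows, since $[C]=[C']$ in $H_{2n-1}(M,\R)$ iff they have the same Poincar\'e dual in $H^1(M,\R)$ iff $[\psi_C]=[\psi_{C'}]$.

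The main obstacle I anticipate is the bookkeeping in the first step: showing cleanly that $\psi_C(f,g)$ depends only on the de Rham class that $C$ represents under Poincar\'e duality, i.e. that after stripping off the exact pieces $d(fg\,\om^{n-1}/(n-1)!)$ and a primitive of $2df\wedge dg\wedge\om^{n-1}/(n-1)!$, what remains integrated over $C$ equals $\int_M \alpha\wedge(\text{closed representative})$. One has to make sure the primitives chosen are genuine global forms on $M$ (the exactness of $df\wedge dg\wedge\om^{n-1}/(n-1)!$ on a closed symplectic $M$ is automatic from $\int_M = 0$, but one should phrase the argument so that the primitive is not needed explicitly — only that the remaining form is closed, so its integral over $C$ only depends on $[C]$). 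A slicker route, which I would adopt to avoid choosing primitives, is: $\psi_C$ descends to a well-defined pairing on $H_{2n-1}(M,\R)$ because for fixed $f,g$ the value $\int_C gdf\wedge\om^{n-1}/(n-1)!$ changes by $\int_{\partial D}(\cdots) = \int_D d(\cdots) = -\int_D\{f,g\}\om^n/n!$ when $C$ is altered by $\partial D$ — but this is not zero in general, so in fact $\psi_C$ does depend on $C$ as a chain, and only $[\psi_C]$ (the cohomology class of the cocycle) depends on $[C]$; the discrepancy $\psi_C - \psi_{C'}$ for $C - C' = \partial D$ is the coboundary $\delta\rho_D$ with $\rho_D(h) = -\int_D h\om^n/n!$, which is the honest mechanism behind "cohomologous iff differ by a boundary". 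I would present it in this last form, as it parallels the $\rho_Q$ appearing in Proposition~\ref{trekterug}.
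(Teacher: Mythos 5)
Your overall route is sound and is genuinely different from what the paper does: the paper's own ``proof'' is a one-line reduction to the splitting $C^{\infty}(M)\simeq\R\oplus\X_{\ham}(M,\om)$ followed by a citation of the end of \cite[\S 5]{JV}, whereas you give a self-contained comparison of $\psi_C$ with $\psi_\alpha$. The architecture --- antisymmetrize $g\,df\wedge\om^{n-1}$, discard the exact piece over the cycle $C$, correct to a closed form, pair with $C$ via Poincar\'e duality, and match against the second formula in \eqref{rog} --- is the standard argument and can be made to work. Two details need fixing in the bookkeeping: the elementary identity is $g\,df=\tfrac{1}{2}(g\,df-f\,dg)+\tfrac{1}{2}d(fg)$ (your version, as written, equals $-g\,df$), and the matching of the main term requires using that $\psi_\alpha$ is itself antisymmetric, i.e.\ $\psi_\alpha(f,g)=\tfrac{1}{2}\int_M\alpha\wedge(g\,df-f\,dg)\wedge\om^{n-1}/(n-1)!$, which holds because $\int_M\alpha\wedge d\bigl(fg\,\om^{n-1}\bigr)=0$ for closed $\alpha$ on closed $M$.

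The one genuine gap is the sentence ``so I can subtract a further primitive without changing the pairing with $C$.'' That is false: if $\gamma_{f,g}$ is a primitive of $d\bigl(\tfrac{1}{2}(g\,df-f\,dg)\wedge\om^{n-1}/(n-1)!\bigr)=-\{f,g\}\,\om^{n}/n!$, subtracting it changes $\int_C(\cdot)$ by $\int_C\gamma_{f,g}$, which need not vanish since $\gamma_{f,g}$ is not closed. What rescues the argument is that this correction, together with the matching term $\int_M\alpha\wedge\gamma_{f,g}$ on the Roger side, depends on $(f,g)$ only through the single function $\{f,g\}$: choosing the primitive linearly and continuously, $h\mapsto\gamma_h$ with $d\gamma_h=h\,\om^n/n!$ on the kernel of $\int_M(\cdot)\,\om^n$ (Hodge theory), the residual terms assemble into $\rho(\{f,g\})=-\delta\rho(f,g)$ for the continuous $1$-cochain $\rho(h)=\int_C\gamma_h-\int_M\alpha\wedge\gamma_h$ --- exactly the mechanism you correctly identify with $\rho_D$ for the comparison $\psi_C-\psi_{C'}$, but which you do not apply to the crucial comparison $\psi_C-\psi_\alpha$. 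Equivalently and more slickly: the current $T=\int_C(\cdot)-\int_M\alpha\wedge(\cdot)$ is closed and vanishes on closed forms, hence $T=\partial S$ for a $2n$-current $S$, and $T(\beta_{f,g})=S(d\beta_{f,g})=-S(\{f,g\}\,\om^n/n!)$ is a coboundary. With that repair, your reduction to Theorem~\ref{LAcohomology} and Poincar\'e duality goes through.
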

\begin{proof}
In view of the fact that $C^{\infty}(M) \simeq \R \oplus \X_{\ham}(M,\om)$ for compact $M$,
this follows from the discussion at the end of \cite[\S 5]{JV}.
\end{proof}

\subsection{An integrable lattice in $H^2(C^{\infty}(M),\R)$}

By applying Theorem \ref{centralresult} to the regular contact manifold $(P,\th)$
we obtain a lattice of integrable classes in 
the Lie algebra cohomology $H^2(C^{\infty}(M),\R)$.
In the following, we denote the lattice of integral classes by 
$H_{*}(M,\R)_{\Z}$ (homology) or $H^*(M,\R)_{\Z}$ (cohomology).

\begin{Corollary}[{\bf Singular version}]\label{singversion}
Let $[C] \in H_{2n-1}(M,\R)_{\Z}$ be in the image under $\pi_{*}$
of $H_{2n-1}(P,\R)_{\Z}$. Then
the Lie algebra extension corresponding to the class
\[
	\frac{n+1}{2\pi\, \mathrm{vol}(M)} [\psi_{C}]
\] 
integrates to a central 
extension 
of $\Quant(P,\th)_0$ by~$S^1$. 
In the above expression,  $\mathrm{vol}(M) = \int_{M}\omega^n/n!$ is the Liouville volume of $M$, 
and $[\psi_{C}] \in H^2(C^{\infty}(M), \R)$ is the singular class \eqref{sg22}.
\end{Corollary}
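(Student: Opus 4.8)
The plan is to derive this as a direct consequence of Theorem~\ref{centralresult} applied to the regular contact manifold $(P,\th)$, using Proposition~\ref{cohthm} and Remark~\ref{BWremark} to translate between the cocycle $\si_Q$ on $C^\oo(P)^E$ and the singular cocycle $\psi_C$ on $C^\oo(M)$. First I would fix the normalization: since $\th$ has curvature $\om$ we have $d\th = \pi^*\om$, so $\mu = \frac{1}{(n+1)!}\th\wedge\pi^*\om^n$ and hence $\mathrm{vol}_\mu(P) = \frac{1}{(n+1)!}\int_P \th\wedge\pi^*\om^n$. Fiber integration along $\pi$ gives $\int_P\th\wedge\pi^*\om^n = \int_M \om^n = n!\,\mathrm{vol}(M)$, so $\mathrm{vol}_\mu(P) = \frac{n!}{(n+1)!}\mathrm{vol}(M) = \frac{\mathrm{vol}(M)}{n+1}$, and therefore $\frac{1}{\mathrm{vol}_\mu(P)} = \frac{n+1}{\mathrm{vol}(M)}$.

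Next I would account for the factor $2\pi$. The subtlety is that the connection $1$-form $\th$ and the curvature $\om$ are normalized so that $d\th = \pi^*\om$, whereas the \emph{integrality} hypothesis in Theorem~\ref{taual}, which underlies Theorem~\ref{lint}, requires the relevant top cohomology class to be integral in the sense of evaluating to integers on integral cycles. The prequantization condition relating a connection $1$-form to an integral curvature class carries a normalizing $2\pi$ (the de Rham class of $\frac{1}{2\pi}\om$ is integral, equivalently $[\th\wedge(d\th)^n]$ has appropriate periods). Concretely: for $[C]\in H_{2n-1}(P,\R)_\Z$ integral, the volume form we are allowed to feed into Theorem~\ref{lint} is $k\nu = \frac{k}{\mathrm{vol}_\mu(P)}\mu = \frac{k}{2\pi}\cdot\frac{1}{\mathrm{vol}_\mu(P)}\cdot 2\pi\mu$, and the integrality of the pulled-back class on $P$ against $Q$ (with $\pi_*Q = C$) forces the effective multiplier to be $\frac{k}{2\pi}$ rather than $k$. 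I would make this precise by tracking the period homomorphism through the pullback diagram~\eqref{pbdiag}: the period group of the extension $\widehat{\Diff}_{\ex}(P,k\nu)$ is $2\pi\Z$ (it is an $S^1$-extension), and pulling back by the $G$-action identifies the relevant class as $\frac{k}{2\pi\,\mathrm{vol}_\mu(P)}[\si_Q] = \frac{k(n+1)}{2\pi\,\mathrm{vol}(M)}[\si_Q]$.

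Then I would invoke Remark~\ref{BWremark}: under the isomorphism $\ze$ identifying $C^\oo(M)$ with $C^\oo(P)^E$, the cocycle $\si_Q$ corresponds to the singular cocycle $\psi_{\pi_*Q} = \psi_C$ on the Poisson Lie algebra. Combining, the integrable class produced by Theorem~\ref{centralresult} with $k=1$ is exactly $\frac{n+1}{2\pi\,\mathrm{vol}(M)}[\psi_C]$, and since $G = \Quant(P,\th)_0$ acts on $(P,\th)$ by exact strict contactomorphisms (Proposition~\ref{LAsame}), Theorem~\ref{centralresult} applies verbatim. Finally I would note that the hypothesis $[C]\in\pi_*(H_{2n-1}(P,\R)_\Z)$ is precisely what is needed to choose $Q\in\Gr_{2n-1}(P)$ with $\pi_*[Q] = [C]$: an integral class on $P$ of codimension two is Poincaré dual to an embedded closed submanifold (Remark~\ref{pd} applied to $P$ in place of $M$), which is the $Q$ fed into Theorem~\ref{centralresult}.

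**Main obstacle.** The genuinely delicate point is the bookkeeping of the $2\pi$ and of which multiples $k$ yield integral classes \emph{on $P$} versus on $\M\subset\Gr_{2n-1}(P)$ — i.e.\ correctly matching the period group of the $S^1$-extension with the normalization conventions for $\th$, $\om$, and $\mu$. Everything else is a substitution of identities already established in Remark~\ref{BWremark}, Proposition~\ref{cohthm}, and Theorem~\ref{centralresult}; the arithmetic of the constant $\frac{n+1}{2\pi\,\mathrm{vol}(M)}$ is where care is required.
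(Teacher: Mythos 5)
Your overall strategy is exactly the paper's: apply Theorem~\ref{centralresult} to $G=\Quant(P,\th)_0$ (legitimate by Proposition~\ref{LAsame}), identify $\si_Q$ on $C^\oo(P)^E\simeq C^\oo(M)$ with the singular cocycle $\psi_{\pi_*Q}$ as in Remark~\ref{BWremark}, use Remark~\ref{pd} on $P$ to realize any class in $H^2(P,\R)_{\Z}$ by an embedded $Q$ with $\pi_*[Q]=[C]$, and compute the constant $1/\mathrm{vol}_\mu(P)$. However, your computation of $\mathrm{vol}_\mu(P)$ contains a genuine error. Fiber integration does \emph{not} give $\int_P\th\wedge\pi^*\om^n=\int_M\om^n$: since $i_E\th=1$ and $E$ generates the $2\pi$-periodic principal $S^1$-action, the fiber integral of $\th$ over each orbit is $2\pi$, so $\pi_!(\th\wedge\pi^*\om^n)=2\pi\,\om^n$ and hence
\[
\mathrm{vol}_\mu(P)=\frac{1}{(n+1)!}\int_P\th\wedge\pi^*\om^n
=\frac{2\pi\, n!}{(n+1)!}\,\mathrm{vol}(M)=\frac{2\pi}{n+1}\,\mathrm{vol}(M)\,,
\]
which is where the $2\pi$ in the statement comes from --- and from nowhere else.

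Having lost this $2\pi$, you then reintroduce it through an argument about the period group of the $S^1$-extension and an ``effective multiplier $k/2\pi$.'' This step is not valid: Theorem~\ref{centralresult} already asserts, with no further renormalization, that the class $\frac{k}{\mathrm{vol}_\mu(P)}[\si_Q]$ itself is integrable; there is no additional division by $2\pi$ hidden in the integrality of $k\nu$ (which is guaranteed by $\mathrm{vol}_\nu(P)=1$) or in the prequantization of $\Gr_{2n-1}(P)$. Your final constant $\frac{n+1}{2\pi\,\mathrm{vol}(M)}$ is correct only because the two errors cancel. To repair the proof, delete the ``period homomorphism'' paragraph entirely and replace the volume computation by the fiber integration above; the rest of your argument then goes through as in the paper.
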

\begin{proof}
We apply Theorem~\ref{centralresult} to the unit component  
$G = \Diff(P,\th)_0$ of the quantomorphism group.
By \cite[VIII.4]{Om74}, this 
is a Fr\'echet Lie group, with Lie algebra $\g$ isomorphic to the 
Poisson Lie algebra $C^{\oo}(M)$.
By Proposition~\ref{LAsame}, the group $\Diff(P,\th)_0$ 
coincides with the group $\Diff_{\ex}(P,\th)$ of exact 
strict contactomorphisms. 

Recall that $(P,\th)$ is a regular contact manifold,
for which
$d\th = \pi^*\omega$ and $C^{\infty}(P)^{E} = \{\pi^*f \,;\, f \in C^{\infty}(M)\}$.
If $Q \subseteq P$ is an embedded, closed, oriented submanifold, then 
the Hamiltonian action of $\Quant(P,\th)_{0}$ on the connected component $\M \subseteq \Gr_{2n-1}(P)$
of $Q$ gives rise 
to a central Lie group extension with Lie algebra cocycle on $C^\oo(M)$
\[
\psi(f,g) = \frac{1}{\mathrm{vol}_{\mu}(P)}\si_Q(\pi^*f,\pi^*g)= \frac{1}{\mathrm{vol}_{\mu}(P)} \int_{Q} \pi^* \left(gdf\wedge\omega^{n-1}\right)/(n-1)!\,.
\]
Expressing the volume of $P$ as
$\mathrm{vol}_{\mu}(P)=\frac{2\pi}{n+1}\mathrm{vol}(M)$, we find
\[
\psi(f,g) = \frac{n+1}{2\pi \,\mathrm{vol}(M)} \,\psi_{\pi_*Q}(f,g)\,,\\
\]
where  $\pi_* Q$
is the pushforward along $\pi \colon P \rightarrow M$ of the singular $(2n-1)$-cycle represented by 
the embedded closed submanifold $Q \subseteq P$, and $\psi_{\pi_*Q}$
is the singular cocycle of \eqref{sg22}.
By Remark \ref{pd}, every class in $H^2(P,\R)_{\Z}$ can be represented
by an oriented, embedded submanifold $Q$, so with $[C] = \pi_* [Q]$ in 
$H_{2n-1}(M,\R)_{\Z}$, the result follows.
\end{proof} 

\begin{Remark}[\bf Triviality of Lie algebra extensions]\label{Rk:Triviality}
Note that from the above proof,
it follows that the Lie algebra extension corresponding to the Hamiltonian action of 
$\Quant(P,\th)_{0}$ on the connected component $\M$ of $Q$ in $ \Gr_{2n-1}(P)$ is trivial if and only if 
$[\psi_{\pi_{*}Q}] \in H^2(C^{\infty}(M),\R)$ is zero. By Proposition \ref{cohthm}, this is the case
if and only if
$\pi_*[Q] \in H_{2n-1}(M,\R)$ vanishes. 
\end{Remark}

Using Poincar\'e duality, we translate this to Roger cocycles and de Rham cohomology.
For a smooth map $f \colon M \rightarrow N$, we denote by $f_{!} \colon H^*(M,\Z) \rightarrow H^*(N,\Z)$
the map that corresponds to $f_* \colon H_*(M,\Z) \rightarrow H_*(N,\Z)$ under Poincar\'e duality.
For the prequantum bundle $\pi \colon P \rightarrow M$, the induced map
\[\pi_{!} \colon H^{k}(P,\R)_{\Z} \rightarrow H^{k-1}(M,\R)_{\Z}\] on 
integral classes in de Rham cohomology is fiber integration.

\begin{Corollary}[{\bf de Rham version}]\label{derhamversion}
For every class $[\alpha]$ in the sublattice $\pi_{!} (H^{2}(P,\R)_{\Z})$
of $H^{1}(M,\R)$,
the Lie algebra extension corresponding to the class
\[
	\frac{n+1}{2\pi\, \mathrm{vol}(M)} [\psi_{\alpha}]
\]
in $H^2(C^{\infty}(M), \R)$ integrates to a central 
extension 
of $\Quant(P,\th)_0$ by~$S^1$.
\end{Corollary}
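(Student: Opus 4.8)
The plan is to deduce this de Rham version directly from the singular version, Corollary~\ref{singversion}, by translating through Poincar\'e duality, exactly as the singular and Roger pictures of $H^2(C^{\infty}(M),\R)$ are linked in Proposition~\ref{cohthm}.

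First I would unwind the hypothesis. Given $[\al]$ in the sublattice $\pi_{!}(H^2(P,\R)_{\Z}) \subseteq H^1(M,\R)$, choose an integral class $[\be] \in H^2(P,\R)_{\Z}$ with $\pi_{!}[\be] = [\al]$. By Remark~\ref{pd}, the integral class $[\be]$ is Poincar\'e dual to some closed, oriented, embedded codimension-two submanifold $Q \subseteq P$; concretely $Q$ arises as the transverse zero set of a section of a rank-two bundle over $P$ with Euler class $[\be]$. Set $[C] := \pi_{*}[Q] \in H_{2n-1}(M,\R)$. Since $[Q]$ lies in the integral lattice $H_{2n-1}(P,\R)_{\Z}$ and $\pi_{*}$ preserves integrality, $[C]$ lies in $H_{2n-1}(M,\R)_{\Z}$ and, by construction, in the image of $\pi_{*}$ of integral classes, so that Corollary~\ref{singversion} is applicable to $[C]$.

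Next I would match the two classes in $H^2(C^{\infty}(M),\R)$. By definition, $\pi_{!}\colon H^2(P,\R)\to H^1(M,\R)$ is the map corresponding to $\pi_{*}\colon H_{2n-1}(P,\R)\to H_{2n-1}(M,\R)$ under Poincar\'e duality; hence $[\al]=\pi_{!}[\be]$ is Poincar\'e dual to $\pi_{*}[Q]=[C]$. Proposition~\ref{cohthm} then gives that $\psi_{C}$ is cohomologous to $\psi_{\al}$, so that
\[
\frac{n+1}{2\pi\,\mathrm{vol}(M)}[\psi_{\al}] \;=\; \frac{n+1}{2\pi\,\mathrm{vol}(M)}[\psi_{C}] \quad\text{in } H^2(C^{\infty}(M),\R)\,.
\]
The right-hand side integrates to a central $S^1$-extension of $\Quant(P,\th)_0$ by Corollary~\ref{singversion}, and hence so does the left-hand side, which is the desired conclusion.

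I do not expect a serious obstacle here; the content is entirely carried by the two already-established facts. The only point that needs care is the bookkeeping of the integrality lattices: one must be sure that fiber integration $\pi_{!}$ really does carry $H^2(P,\R)_{\Z}$ into $H^1(M,\R)_{\Z}$, and that on integral classes it is genuinely Poincar\'e-dual to the homological pushforward $\pi_{*}$. This is precisely the statement recalled just before the corollary, and combined with Remark~\ref{pd} it closes the loop between ``$[\al]$ lies in the image of $\pi_{!}$'' and ``$[\al]$ is Poincar\'e dual to some $\pi_{*}[Q]$ with $Q$ an embedded codimension-two submanifold of $P$'', which is the exact input required by Corollary~\ref{singversion}.
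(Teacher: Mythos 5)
Your argument is correct and follows essentially the same route as the paper: represent a preimage $[\be]\in H^2(P,\R)_{\Z}$ of $[\al]$ by an embedded codimension-two $Q\subseteq P$ via Remark~\ref{pd}, identify $[\al]$ as the Poincar\'e dual of $[C]=\pi_*[Q]$ (the paper verifies this compatibility of $\pi_!$ with $\pi_*$ by the explicit computation $\int_M \pi_!\eta_Q\wedge\gamma=\int_C\gamma$, where you instead invoke the definition of $\pi_!$), and then combine Proposition~\ref{cohthm} with Corollary~\ref{singversion}. No gaps.
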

\begin{proof}
The Roger cocycle 
$\psi_{\alpha}$ is cohomologous to the singular cocycle $\psi_{C}$
if $[C] \in H_{2n-1}(M,\R)_{\Z}$ is Poincar\'e dual to
$[\alpha] \in H^1(M,\R)_{\Z}$.
If $[C] = \pi_* [Q]$ for $[Q] \in H_{2n-1}(Q,\R)_{\Z}$, then 
$[\alpha] = \pi_{!} [\eta_{Q}]$ for the Poincar\'e dual $[\eta_{Q}]$ of $Q$, as
\[\int_{M} \pi_{!} \eta_{Q} \wedge \gamma = \int_{P}\eta_{Q} \wedge \pi^*\gamma = \int_{Q} \pi^*\gamma = \int_{C}\gamma\,.\]
The result now follows from Corollary \ref{singversion}.
\end{proof}

The lattice $\pi_{!}(H^2(P,\R)_{\Z})$, 
which yields the integrable classes in Lie algebra cohomology,
is contained in the lattice $(\pi_{!}H^2(P,\R))_{\Z}$ of integral classes 
in $\pi_{!}H^2(P,\R)$. 
Note however that it can be strictly smaller, cf.\  \S \ref{sec:thurston}.
The following proposition
is helpful in determining this lattice.

\begin{Proposition}\label{Gysin}
The image of $\pi_{!} \colon H^{2}(P,\Z) \rightarrow H^{2}(M,\Z)$
is the kernel of taking the cup product with the Euler class $[P]\in H^2(M,\Z)$ 
of the bundle $P$,
\begin{equation}\label{imageofpr}
\pi_{!}H^{2}(P,\Z) = \{[\alpha] \in H^1(M,\Z)\,;\, [P] \smallsmile [\alpha] = 0\}.
\end{equation}
\end{Proposition}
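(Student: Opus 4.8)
The plan is to deduce the statement from the Gysin exact sequence of the oriented $S^1$-bundle $\pi\colon P\to M$. Writing everything with integer coefficients, the Gysin sequence reads
\[
\cdots \to H^{0}(M,\Z) \xrightarrow{\ \smallsmile [P]\ } H^{2}(M,\Z) \xrightarrow{\ \pi^{*}\ } H^{2}(P,\Z) \xrightarrow{\ \pi_{!}\ } H^{1}(M,\Z) \xrightarrow{\ \smallsmile [P]\ } H^{3}(M,\Z) \to \cdots,
\]
where $[P]\in H^{2}(M,\Z)$ is the Euler class of the bundle and $\pi_{!}$ is the Gysin pushforward (integration over the fibre), which drops degree by one since $\dim S^{1}=1$. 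First I would recall briefly why fibre integration $\pi_{!}$ on de Rham cohomology with integral periods agrees with the topological Gysin map, so that the identification made just before the proposition is legitimate; this is standard (e.g. \cite{BT82}) and follows from the fact that $\pi_{!}$ satisfies the projection formula $\int_{M}\pi_{!}\eta\wedge\gamma=\int_{P}\eta\wedge\pi^{*}\gamma$, which pins it down uniquely.

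Granting that, exactness of the Gysin sequence at the term $H^{1}(M,\Z)$ gives immediately
\[
\operatorname{Im}\bigl(\pi_{!}\colon H^{2}(P,\Z)\to H^{1}(M,\Z)\bigr) \;=\; \ker\bigl(\smallsmile [P]\colon H^{1}(M,\Z)\to H^{3}(M,\Z)\bigr) \;=\; \{[\alpha]\in H^{1}(M,\Z)\,;\,[P]\smallsmile[\alpha]=0\},
\]
which is exactly the asserted identity \eqref{imageofpr}. (I note that the statement as displayed writes the target of $\pi_!$ as $H^2(M,\Z)$ in the prose but $H^1(M,\Z)$ in the formula; the correct degree is $H^{1}(M,\Z)$, as the fibre integration lowers degree by one, and this is how I would phrase it.)

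There is essentially no obstacle here: the only point that requires a line of justification rather than a citation is the compatibility between the analytic fibre integration on forms with integral periods and the topological Gysin map, and even that is routine. I would therefore keep the proof to two short paragraphs: one invoking the Gysin sequence and the identification of $\pi_{!}$, and one reading off exactness at $H^{1}(M,\Z)$ to conclude.
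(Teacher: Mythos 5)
Your proof is correct and is essentially identical to the paper's: the authors also simply invoke the Gysin long exact sequence of the principal $S^1$-bundle in integral cohomology and read off exactness at $H^1(M,\Z)$. Your observation that the target of $\pi_!$ should be $H^1(M,\Z)$ rather than $H^2(M,\Z)$ is also right; the paper's displayed sequence uses the correct degree.
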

\noindent Note that the image of $[P]$ in $H^2(M,\R)$ 
is the class $[\omega]$ of the symplectic form.
\begin{proof}
This follows immediately from
the Gysin long exact sequence 
in integral cohomology, associated to the principal $S^1$-bundle $P\to M$, 
\[
\dots \to 
H^2(P,\Z)\stackrel{\pi_!}{\to}
H^1(M,\Z)\stackrel{[P]\smallsmile \,\cdot\,}{\to}H^3(M,\Z)\stackrel{\pi^*}{\to}H^3(P,\Z)\to\dots,
\]
see e.g.\ \cite[\S VI.13]{Br93}.
\end{proof}

Similarly, it follows from Poincar\'e duality (or the Gysin sequence in homology, \cite[\S 9.3]{Sp66}), that
\begin{equation}\label{GysinHomology}
\pi_{*} (H_{2n-1}(P,\Z)) = \{[C] \in H_{2n-1}(M,\Z)\,;\, [C_{P}] \smallfrown [C] = 0\}\,,
\end{equation}
where $[C_{P}]\in H_{2n-2}(M,\Z)$, Poincar\'e dual to $[P]$, is the zero set of a transversal
section of the prequantum line bundle 
$P \times_{S^1}\mathbb{C} \rightarrow M$.

\subsection{Examples}
We calculate the integrable classes in $H^2(C^{\infty}(M),\R)$ that correspond to our
group extensions for a number of explicit examples. 
For compact surfaces, they span the second Lie algebra cohomology, 
whereas for compact K\"ahler manifolds of $\mathrm{dim}_{\R}\geq 4$, 
they are all trivial. 
For non-K\"ahler symplectic manifolds, our method yields non-trivial integrable
classes, but they do not necessarily span $H^2(C^\oo(M),\R)$.
We illustrate this at the hand of Thurston's nilmanifold,
which was historically the first example of a non-K\"ahler symplectic manifold.

\subsubsection{Compact surfaces}
Let $M$ be a compact orientable 2-dimensional manifold of genus $g$, with generators $[a_{1}], \ldots, [a_{g}]$ and 
$[b_{1}], \ldots, [b_{g}]$ of
$H_{1}(M,\Z)$. 
A symplectic form $\omega \in \Omega^2(M)$ is prequantizable
if and only if $\mathrm{vol}(M) \in \Z$.
As $H_3(M,\Z) = \{0\}$, equation \eqref{GysinHomology} shows that
$\pi_* H_1(P,\Z) = H_{1}(M,\Z)$.
From Corollary \ref{singversion}, we thus obtain:
\begin{Corollary}
For $k_i, l_i \in \Z$, the Lie algebra cocycles
on the Poisson Lie algebra $C^\oo(M)$
\begin{equation} \label{2dcase}
\psi(f,g) =\frac{1}{\pi \mathrm{vol}(M)} \left(\sum_{i=1}^{g} k_i \int_{a_{i}}  gdf + 
\sum_{i=1}^{g} l_i \int_{b_{i}} gdf\right)
\end{equation}
integrate to central $S^1$-extensions of the group $\Quant(P,\th)_{0}$
of quantomorphisms.
\end{Corollary}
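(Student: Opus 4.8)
The plan is to read the statement off Corollary~\ref{singversion} by specializing to $\dim M = 2$, i.e.\ $n = 1$. First I would substitute $n = 1$ into the formulas of Corollary~\ref{singversion}: then $\omega^{n-1} = 1$, $(n-1)! = 1$, and $\frac{n+1}{2\pi\,\mathrm{vol}(M)} = \frac{1}{\pi\,\mathrm{vol}(M)}$, with $\mathrm{vol}(M) = \int_M \omega$. Hence the singular cocycle \eqref{sg22} attached to a singular $1$-cycle $C$ on $M$ reduces to $\psi_C(f,g) = \int_C g\,df$, and the class supplied by Corollary~\ref{singversion} is $\frac{1}{\pi\,\mathrm{vol}(M)}[\psi_C]$. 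Taking $C := \sum_{i=1}^g k_i\,a_i + \sum_{i=1}^g l_i\,b_i$ and using that $C \mapsto \psi_C$ is linear in $C$, the cocycle $\frac{1}{\pi\,\mathrm{vol}(M)}\psi_C$ is exactly the cocycle $\psi$ of \eqref{2dcase}.

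It then remains to verify the hypothesis of Corollary~\ref{singversion}, namely that $[C]$ is an integral class in $H_1(M,\R)$ lying in the image of $\pi_*\colon H_1(P,\R)_{\Z} \to H_1(M,\R)_{\Z}$. Integrality is immediate, since $C$ is an integral combination of the integral generators $[a_i],[b_i]$ of $H_1(M,\Z)$. For the range of $\pi_*$ I would invoke \eqref{GysinHomology} with $n = 1$: its image consists of those $[C] \in H_1(M,\Z)$ annihilated by the intersection product with the class $[C_P] \in H_0(M,\Z)$ Poincar\'e dual to the Euler class of $P$. But the intersection product of a class in $H_0(M,\Z)$ with a class in $H_1(M,\Z)$ lands in $H_{-1}(M,\Z) = 0$, so this condition is vacuous; equivalently, the cup-product obstruction of Proposition~\ref{Gysin} lives in $H^3(M,\Z)$, which vanishes for a closed surface. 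Hence $\pi_* H_1(P,\Z) = H_1(M,\Z)$, and in particular $[C]$ is in the image of $\pi_*$.

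With these two observations Corollary~\ref{singversion} applies verbatim, yielding that the class $\frac{1}{\pi\,\mathrm{vol}(M)}[\psi_C] = [\psi]$ integrates to a central $S^1$-extension of $\Quant(P,\th)_0$, which is the assertion. There is no genuine obstacle here: the only points requiring a little care are the bookkeeping of the constants and factorials at $n = 1$, and the observation --- automatic for dimensional reasons on a surface --- that $\pi_*$ surjects onto $H_1(M,\Z)$, so that \emph{every} integral $1$-cycle on $M$ is eligible for Corollary~\ref{singversion}.
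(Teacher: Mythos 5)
Your proposal is correct and follows exactly the route the paper takes: specialize Corollary \ref{singversion} to $n=1$ (so the constant becomes $\frac{2}{2\pi\,\mathrm{vol}(M)}=\frac{1}{\pi\,\mathrm{vol}(M)}$ and $\psi_C(f,g)=\int_C g\,df$), and observe via the Gysin sequence \eqref{GysinHomology} that the obstruction vanishes for dimensional reasons ($H_3(M,\Z)=0$ for a surface), so $\pi_*H_1(P,\Z)=H_1(M,\Z)$ and every integral combination $C=\sum k_i a_i+\sum l_i b_i$ is eligible. Your bookkeeping of the constants and the surjectivity argument match the paper's (very brief) justification.
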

By Theorem~\ref{LAcohomology}, the $\R$-span of this integrable lattice 
is the full second Lie algebra cohomology of the Poisson Lie algebra $C^{\infty}(M)$.
{The above result on integrable cocycles appears to be new.}

\subsubsection{K\"ahler manifolds}

If $M$ is a prequantizable compact K\"ahler manifold 
of dimension $2n$, $n\ge 2$, then the map 
\[H^1(M,\R) \rightarrow H^{2n-1}(M,\R) \,;\quad [\alpha] \mapsto [\omega]^{n-1}\wedge [\alpha]\]
is an isomorphism by the Hard Lefschetz Theorem.
Since $n\ge 2$, the map $[\alpha] \mapsto [\omega] \wedge [\alpha]$ is injective, 
so Proposition \ref{Gysin} implies that $\pi_{!}H^{2}(P,\R)_{\Z} = \{0\}$.
From Remark \ref{Rk:Triviality}, we then obtain the following result.
\begin{Corollary}
If $M$ is a compact K\"ahler manifold of real dimension $\geq 4$, 
then the central Lie group extension derived from the Hamiltonian action of 
$\Quant(P,\th)_{0}$ on
the connected component $\M$ of  $\mathrm{Gr}_{2n-1}(P)$ splits at the Lie algebra level.
\end{Corollary}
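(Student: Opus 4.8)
The plan is to deduce this corollary directly from Remark~\ref{Rk:Triviality} together with the cohomological computation already carried out just above the statement. The essential point is that by Remark~\ref{Rk:Triviality}, the Lie algebra extension arising from the Hamiltonian action of $\Quant(P,\th)_{0}$ on the connected component $\M$ of $Q$ in $\Gr_{2n-1}(P)$ is trivial precisely when the singular class $[\psi_{\pi_{*}Q}] \in H^2(C^{\infty}(M),\R)$ vanishes, which by Proposition~\ref{cohthm} happens if and only if $\pi_*[Q] = 0$ in $H_{2n-1}(M,\R)$; equivalently, under Poincar\'e duality, if and only if the Poincar\'e dual class $\pi_{!}[\eta_Q] \in H^1(M,\R)$ vanishes. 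So it suffices to show that $\pi_{!} H^2(P,\R)_{\Z} = \{0\}$, since every embedded closed oriented submanifold $Q$ has a Poincar\'e dual in $H^2(P,\R)_{\Z}$ (Remark~\ref{pd}).

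First I would invoke Proposition~\ref{Gysin}, which identifies the image of $\pi_{!} \colon H^{2}(P,\R) \to H^1(M,\R)$ with the kernel of cup product with the Euler class $[P]$, whose image in $H^2(M,\R)$ is $[\omega]$. Thus I must argue that multiplication by $[\omega]$ is injective on $H^1(M,\R)$. This is where the K\"ahler hypothesis and the dimension bound $n \geq 2$ enter: by the Hard Lefschetz Theorem, the iterated Lefschetz operator $[\alpha] \mapsto [\omega]^{n-1} \wedge [\alpha]$ is an isomorphism $H^1(M,\R) \xrightarrow{\sim} H^{2n-1}(M,\R)$. Since this isomorphism factors through multiplication by $[\omega]$ (as $[\omega]^{n-1}\wedge[\alpha] = [\omega]^{n-2}\wedge([\omega]\wedge[\alpha])$), the first factor $[\alpha] \mapsto [\omega]\wedge[\alpha]$ must itself be injective. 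Therefore $\pi_{!}H^2(P,\R) = \{0\}$, hence a fortiori $\pi_{!}H^2(P,\R)_{\Z} = \{0\}$.

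Combining these, every $Q$ used in the construction has $\pi_*[Q] = 0$, so the corresponding Lie algebra extension is trivial, i.e.\ the central Lie group extension splits at the Lie algebra level. I do not anticipate a genuine obstacle here; the only point requiring a moment's care is the observation that Hard Lefschetz for $H^1$ forces injectivity of a single application of $[\omega]\wedge(\cdot)$ rather than merely of the $(n-1)$-fold power — but this is immediate from the factorization above, and it is exactly why the bound $n \geq 2$ (equivalently $\dim_{\R} M \geq 4$) is needed: for $n = 1$ the operator $[\omega]\wedge(\cdot)\colon H^1 \to H^3 = 0$ is the zero map and carries no information, which is consistent with the full-rank result obtained for surfaces.
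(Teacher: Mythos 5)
Your proposal is correct and follows essentially the same route as the paper: Hard Lefschetz gives injectivity of $[\omega]\wedge(\cdot)$ on $H^1(M,\R)$ for $n\geq 2$, hence $\pi_{!}H^2(P,\R)_{\Z}=\{0\}$ by Proposition~\ref{Gysin}, and the conclusion then follows from Remark~\ref{Rk:Triviality}. Your added remarks on the factorization of the Lefschetz operator and on why $n=1$ is genuinely different are accurate and consistent with the paper.
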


In particular, the Hamiltonian action of $\Quant(P,\th)_{0}$ on $\M$
lifts to 
an action of the universal cover $\widetilde{\Quant}(P,\th)_{0}$ on 
the prequantum bundle $\cP \rightarrow \M$. 
For compact K\"ahler manifolds of $\mathrm{dim}_{\R} \geq 4$, we thus
obtain a \emph{linear} representation of 
$\widetilde{\Quant}(P,\th)_{0}$ on the space 
of sections of the prequantum line bundle $\mathcal{L} \rightarrow \M$
associated to $\cP$. This marks a qualitative difference with
the case $\mathrm{dim}_{\R} = 2$, where
the central Lie algebra deformation \eqref{2dcase} occurs. 

\subsubsection{Thurston's nilmanifold}\label{sec:thurston}
A \emph{nilmanifold} $M = \Gamma \backslash N$ is a compact homogeneous space for a 
connected nilpotent Lie group $N$. 
Without loss of generality, one may assume that $N$ is 1-connected, and
$\Gamma \subseteq N$ discrete and co-compact \cite{Ma49}.
If $\n$ is the Lie algebra of $N$, then 
by \cite{No54},
the inclusion 
$\bigwedge \n^* \hookrightarrow \Omega(M)$ as left invariant forms
yields an isomorphism 
between the Lie algebra cohomology $H^*(\n,\R)$ of $\n$ and the de Rham cohomology 
$H^{*}(M,\R)$ of $M$.
This remains true over rings of integers localized at small primes \cite{LP82}.

To illustrate that nontrivial lattices of integrable cocycles for $C^{\infty}(M)$ exist 
in dimension $\geq 4$, we consider
the quotient $M_{r} = \Gamma \backslash N$ with $N = \mathrm{Heis}(\R,r) \times \R$
and $\Gamma = \mathrm{Heis}(\Z,r) \times \Z$, where $\mathrm{Heis}(R,r)$ is 
the \emph{Heisenberg group} over the ring $R$ at level $r\in \N$, 
\[
\mathrm{Heis}(R,r) := 
\left \{
\begin{pmatrix}
1 & u & h/r\\
0 & 1 & v\\
0 & 0 & 1
\end{pmatrix}
 \,;\, u, v, h \in R
\right\}\,.
\]
For $r=1$, this is Thurston's symplectic manifold $M^4$ \cite{Th76}. We 
include the case $r\neq 1$ to illustrate the role that the torsion of $H^2(M,\Z)$
plays in determining the lattice of integrable cocycles.

The Lie algebra $\n$ is generated by
$x,p,h$ and $z$, with $h,z$ central and $[x, p] = r h$.
The left invariant forms corresponding to the dual basis are
$x^* = du$, $p^* = dv$, $z^* = dz$ and $h^* = dh + r \,udv$.
The differential $\delta \colon \bigwedge\n^* \rightarrow \bigwedge \n^*$
in  \eqref{CE}
is determined by its action on generators:  $\delta h^* = r\, x^* \wedge p^*$ and
$\delta x^* = \delta p^* = \delta z^* = 0$. 

The cohomology of the Eilenberg-MacLane space $M_{r} \simeq K(\Gamma, 1)$
is readily calculated from $H^*(B\Gamma,\Z)$. 
From \cite[\S 6.1]{LP82} and the K\"unneth formula, one finds
\[
H^*(M_{r},\Z) \simeq H^*(\n,\Z) = \bigwedge \left [x^*,p^*, z^*, x^*\wedge h^*, p^*\wedge h^* \right ] /(r x^*\wedge p^*)\,.
\]
For $a,b \in \Z - \{0\}$, we define the (integral) symplectic form $\omega_{ab} \in \Omega^2(M_{r})$ by
\[\omega_{ab} = a h^* \wedge x^* +  b z^* \wedge p^* = a (dh\wedge du - r u du \wedge dv) + b\, dz \wedge dv\,.\] 
It determines the Euler class $[P_{abc}] \in H^2(M_{r},\Z)$ of the prequantum line bundle only up to torsion;
\[ [P_{abc}] = a h^* \wedge x^* +  b z^* \wedge p^* + c x^* \wedge p^*\,, \]
where $c \in \{ 0, \ldots, r-1\}$ labels the different prequantum line bundles with 
the same curvature class $[\omega_{ab}] \in H^2(M_{r}, \R)$.
The kernel of the cup product with the Euler class is given by
\[
\mathrm{Ker}
\Big(
[P_{abc}] \smallsmile \,\cdot\, \colon H^1(M_{r},\Z) \rightarrow H^3(M_{r},\Z)
\Big) = \pi_{!} H^2(P_{abc},\Z) =
\{tx^* \,;\, r|tb\}\,,
\]
as $[P_{abc}] \smallsmile t x^* = tb z^* \wedge p^* \wedge x^*$ is a multiple of 
$\delta (z^* \wedge h^*)$ if and only if $r | tb$.

From Corollary \ref{derhamversion} with $n=2$ and $\mathrm{vol}(M_{r}) = ab$, we then obtain the following lattice of integrable 
classes in second 
Lie algebra cohomology:
\begin{Corollary}
For $(M_{r}, \omega_{ab})$ and $k\in \Z$, the 2-cocycles 
\[
\psi(f,g) = \frac{3 r  k}{2\pi a \,\,\mathrm{gcd}(r,b)} \int_{M} f dg \wedge du \wedge dv \wedge dz
\]
for the Poisson Lie algebra $C^{\infty}(M_{r})$ 
are integrable to the identity component of the quantomorphism group 
$\Quant(P_{abc},\th)_{0}$.
\end{Corollary}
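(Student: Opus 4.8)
The plan is to instantiate Corollary~\ref{derhamversion} for the concrete nilmanifold $(M_r,\omega_{ab})$, so the main work is purely bookkeeping: identifying the relevant lattice, the Liouville volume, and the explicit form of the Roger cocycle. First I would record that $n=2$, so the normalizing factor in Corollary~\ref{derhamversion} reads $\frac{n+1}{2\pi\,\mathrm{vol}(M)} = \frac{3}{2\pi\,\mathrm{vol}(M_r)}$. The Liouville volume is $\mathrm{vol}(M_r) = \int_{M_r}\omega_{ab}^2/2!$; expanding $\omega_{ab}^2 = (a h^*\wedge x^* + b z^*\wedge p^*)^2 = 2ab\, h^*\wedge x^*\wedge z^*\wedge p^*$ and using that $h^*\wedge x^*\wedge p^*\wedge z^* = du\wedge dv\wedge dh\wedge dz$ integrates to $1$ over the fundamental domain (with the chosen orientation), one gets $\mathrm{vol}(M_r) = ab$. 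So the prefactor becomes $\frac{3}{2\pi ab}$, and multiplying by an integer $k$ and by any integer coming from how the lattice embeds will produce the stated $\frac{3rk}{2\pi a\,\mathrm{gcd}(r,b)}$.

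Next I would determine the integrable sublattice $\pi_!H^2(P_{abc},\Z)\subseteq H^1(M_r,\Z)$. This was already computed in the excerpt right before the Corollary: it equals $\{t x^*\,;\, r\mid tb\}$, i.e.\ the multiples of $x^*$ by integers $t$ with $r\mid tb$. The smallest positive such $t$ is $t_0 = r/\mathrm{gcd}(r,b)$, so as a lattice $\pi_!H^2(P_{abc},\Z) = \Z\cdot t_0\,x^* = \Z\cdot \frac{r}{\mathrm{gcd}(r,b)} x^*$. Thus a general integrable class in $H^1(M_r,\R)$ of this type is $[\alpha] = k\,\frac{r}{\mathrm{gcd}(r,b)}\,[x^*]$ for $k\in\Z$, and Corollary~\ref{derhamversion} tells us $\frac{3}{2\pi ab}[\psi_\alpha]$ is integrable, where $\psi_\alpha$ is the Roger cocycle of~\eqref{rog}.

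Then I would unwind the Roger cocycle for $\alpha = x^* = du$ using the second formula in \eqref{rog}: $\psi_\alpha(f,g) = -\int_{M_r}\alpha\wedge fdg\wedge\omega_{ab}^{n-1}/(n-1)! = -\int_{M_r} du\wedge f\,dg\wedge\omega_{ab}$, since $n-1=1$. Now $du\wedge\omega_{ab} = du\wedge(a h^*\wedge x^* + b z^*\wedge p^*) = du\wedge(a(dh - rudv)\wedge du + b\,dz\wedge dv) = b\,du\wedge dz\wedge dv$ (the $h^*\wedge x^*$ term dies because it already contains $du$, and the $u\,dv\wedge du$ piece also contains $du$). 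Hence $\psi_\alpha(f,g) = -b\int_{M_r} f\,dg\wedge du\wedge dz\wedge dv = b\int_{M_r} f\,dg\wedge du\wedge dv\wedge dz$ after reordering $dz\wedge dv \rightsquigarrow dv\wedge dz$. Scaling by $k\,\frac{r}{\mathrm{gcd}(r,b)}$ and by the prefactor $\frac{3}{2\pi ab}$ gives $\frac{3}{2\pi ab}\cdot k\frac{r}{\mathrm{gcd}(r,b)}\cdot b\int f\,dg\wedge du\wedge dv\wedge dz = \frac{3rk}{2\pi a\,\mathrm{gcd}(r,b)}\int_M f\,dg\wedge du\wedge dv\wedge dz$, exactly the asserted cocycle.

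The only genuinely delicate point is orientation and sign conventions: I must fix the orientation on $M_r$ consistently with the conventions used in \eqref{rog} and in defining $\mathrm{vol}(M)$, so that both $\mathrm{vol}(M_r) = ab > 0$ and the sign in front of the final integral come out as stated; a sign slip here would flip the cocycle (which is harmless up to isomorphism of extensions but would make the displayed formula wrong as written). I would also double-check that the computed lattice $\{tx^*\,;\,r\mid tb\}$ is really a rank-one lattice with generator $\frac{r}{\mathrm{gcd}(r,b)}x^*$ — this is the elementary fact that $\{t\in\Z\,;\,r\mid tb\} = \frac{r}{\mathrm{gcd}(r,b)}\Z$ — and that torsion in $H^1(M_r,\Z)$ does not interfere (it cannot, since $H^1$ of a nilmanifold is free). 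Everything else is a direct substitution into Corollary~\ref{derhamversion}.
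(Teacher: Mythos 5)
Your argument is correct and is essentially the paper's own: the paper derives this corollary directly from Corollary~\ref{derhamversion} with $n=2$, $\mathrm{vol}(M_r)=ab$, and the preceding computation $\pi_!H^2(P_{abc},\Z)=\{tx^*\,;\,r\mid tb\}=\tfrac{r}{\gcd(r,b)}\Z\,x^*$, and you fill in the same steps (plus the explicit unwinding of $\psi_{x^*}$) in more detail. The only quibble is the sign of $\psi_{x^*}$ — a careful reordering gives $\psi_{du}(f,g)=-b\int f\,dg\wedge du\wedge dv\wedge dz$ rather than $+b\int(\cdots)$ — but since $k$ ranges over all of $\Z$ and the overall sign also depends on the orientation convention for $M_r$, this does not affect the statement, as you yourself note.
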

Since $H^2(C^{\infty}(M_r),\R) \simeq H^1(M_r,\R)$ 
by Theorem \ref{LAcohomology}, we find a single ray spanned by 
integrable classes in this 3-dimensional cohomology space.


\section{The Hamiltonian group}

In this final section, we briefly describe how 
the central extensions of the quantomorphism group
$\Quant(P,\th)_{0}$, obtained in Corollary \ref{derhamversion},
can be pulled back by a homomorphism 
$\widetilde{\Ham}(M, \om) \rightarrow \Quant(P,\th)_0$. This
yields central $S^1$-extensions 
of the universal covering group $\widetilde{\Ham}(M,\om)$.

The homomorphism $\widetilde{\Ham}(M, \om) \rightarrow \Quant(P,\th)_0$
is obtained as follows. 
Since $M$ is compact, the Kostant-Souriau extension 
\eqref{poissonrijtje}
is split (\cite[Corollary 3.5]{JV})
by the Lie algebra homomorphisms
\[
\R \stackrel{\rho}{\leftrightarrows} C^{\infty}(M) \stackrel{\kappa}{\leftrightarrows} \X_{\ham}(M),
\]
%
defined by
\[
\rho(f) := \frac{1}{\mathrm{vol}(M)}\int_{M} f \omega^n/n!\,, 
\quad
\kappa(X_{f}) := f - \rho(f)\,.
\]
By Lie's Second Theorem for regular Lie groups
\cite[Thm.\ 40.3]{KM97}, 
these Lie algebra homomorphisms integrate to group homomorphisms
\[
\R \stackrel{R}{\leftrightarrows} \widetilde{\Quant}(P,\th)_{0} \stackrel{K}{\leftrightarrows} \widetilde{\Ham}(M,\om),
\]
%
on the universal covering groups. 
This yields the following commutative diagram:
\begin{center}
\begin{tikzpicture}
  \matrix (m) [matrix of math nodes,row sep=1.5em,column sep=2em,minimum width=2em]
  {
      & & \widehat{\Quant}(P,\th)_{0}\\[-4pt]
     \R & \widetilde{\Quant}(P,\th)_{0} & \Quant(P,\th)_{0} \\
     & \widetilde{\Ham}(M,\om) & \Ham(M,\om). \\};
  \path[-stealth]
    (m-1-3) edge (m-2-3)
    (m-2-3) edge (m-3-3)
    (m-2-1) edge (m-2-2)
    (m-2-2) edge [bend right, dashed] node[above]{\scriptsize $R$} (m-2-1)
    (m-2-2) edge node [above] {{\scriptsize Pr}} (m-2-3)
    (m-3-2) edge (m-3-3)
    (m-3-2) edge [dashed] (m-2-3)
    (m-2-2) edge (m-3-2)
    (m-3-2) edge [dashed, bend right] node[right]{{\scriptsize $K$}} (m-2-2);
\end{tikzpicture}
\end{center}
If we pull back the central $S^1$-extension 
$\widehat{\Quant}(P,\th)_{0} \rightarrow \Quant(P,\th)_{0}$
along the homomorphism
$\mathrm{Pr}\o K \colon \widetilde{\Ham}(M, \om) \rightarrow \Quant(P,\th)_0$,
we obtain a central Lie group extension 
$\widehat H \rightarrow \widetilde{\Ham}(M, \om)$
by $S^1$,
%
\begin{center}
\begin{tikzpicture}
  \matrix (m) [matrix of math nodes,row sep=1.5em,column sep=3em,minimum width=2em]
  {
      \widehat H& \widehat{\Quant}(P,\th)_{0}\\
     \widetilde{\Ham}(M,\om)_{0} & \Quant(P,\th)_{0}. \\};
  \path[-stealth]
    (m-1-1) edge [dashed] (m-1-2)
    (m-1-1) edge [dashed](m-2-1)
    (m-1-2) edge (m-2-2)
    (m-2-1) edge node [above] {{\scriptsize $\mathrm{Pr}\circ K$}} (m-2-2);
\end{tikzpicture}
\end{center}
If $\psi$ is the cocycle of $C^{\infty}(M)$ corresponding to the central extension
$\widehat{\Quant}(P,\th)_0$ of $\Quant(P,\th)_{0}$, then
$\kappa^*\psi$ is
the corresponding Lie algebra cocycle on $\X_{\ham}(M)$.
The pullback by $\ka$ of the Roger cocycle \eqref{rog}
is $$(\ka^*\psi_\al)(X_f,X_g)=\int_M f\al(X_g)\om^n/n!\,,$$
and {the pullback} of the singular cocycle \eqref{sg22} is
$$(\ka^*\psi_C)(X_f,X_g)= 
\int_C gdf \wedge\omega^{n-1}/(n-1)!\,.
$$
Both expressions are independent of
the choice of Hamiltonian functions.

\begin{Proposition}\label{intham}
If the  2-cocycle $\psi$ on $C^\oo(M)$ can be integrated to a central extension of the group of quantomorphisms $\Quant(P,\th)_0$,
then $\kappa^*\ps$ 
can be integrated to a central extension of the universal covering group
$\widetilde\Ham(M,\om)$ of the group of Hamiltonian diffeomorphisms. 
\end{Proposition}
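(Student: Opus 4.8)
The plan is to realize the desired central extension of $\widetilde{\Ham}(M,\om)$ as the pullback, along the homomorphism $\mathrm{Pr}\circ K \colon \widetilde{\Ham}(M,\om) \to \Quant(P,\th)_{0}$ constructed above, of the central $S^1$-extension $\widehat{\Quant}(P,\th)_{0} \to \Quant(P,\th)_{0}$ that integrates $\psi$ by hypothesis, and then to identify the resulting Lie algebra cocycle with $\kappa^*\psi$.

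First I would set up the pullback. By assumption there is a central $S^1$-extension $S^1 \to \widehat{\Quant}(P,\th)_{0} \to \Quant(P,\th)_{0}$ whose derived Lie algebra extension is the one determined by $\psi$ with respect to some continuous linear splitting of $\widehat{\g} \to C^\oo(M)$. Pulling back the underlying principal $S^1$-bundle along $\mathrm{Pr}\circ K$ yields a principal $S^1$-bundle $\widehat{H} \to \widetilde{\Ham}(M,\om)$ carrying an inherited group structure, so that $\widehat{H}$ sits in a pullback square of central $S^1$-extensions over $\mathrm{Pr}\circ K$. That $\widehat{H}$ is a genuine (locally convex) Lie group extension follows from Theorem~\ref{pullb}: composing $\mathrm{Pr}\circ K$ with the Hamiltonian action of $\Quant(P,\th)_{0} \subseteq \Diff_{\ex}(P,\nu)$ on the connected component $\M$ of $\Gr_{2n-1}(P)$ exhibits $\widehat{H}$ as the pullback of the prequantization central extension \eqref{kostant} along a Hamiltonian action of $\widetilde{\Ham}(M,\om)$, which is exactly the situation covered by that theorem.

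Next I would identify the derived Lie algebra extension of $\widehat{H} \to \widetilde{\Ham}(M,\om)$. Taking Lie algebras is functorial and sends the pullback square of Lie groups to the pullback square of Lie algebras, so this extension is the pullback of $\widehat{\g} \to C^\oo(M)$ along the tangent map $L(\mathrm{Pr}\circ K)$. Now $\mathrm{Pr}$ is a covering homomorphism, hence induces the identity on Lie algebras, and $L(K) = \kappa$ by the very construction of $K$ via Lie's Second Theorem \cite[Thm.\ 40.3]{KM97}; therefore $L(\mathrm{Pr}\circ K) = \kappa \colon \X_{\ham}(M,\om) \to C^\oo(M)$. With respect to the splitting induced from the chosen splitting of $\widehat{\g}$, the cocycle of the pulled-back extension is then precisely $\kappa^*\psi$. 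The same conclusion can be read off directly from the description \eqref{splittingCE} of the Lie algebra cocycle attached to a pullback along a Hamiltonian action, combined with the chain of identifications used in Corollary~\ref{derhamversion}. Hence $\kappa^*\psi$ is integrable.

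The only step that needs any care is the functoriality assertion, namely that forming the derived Lie algebra extension turns the pullback square of Lie groups into the pullback square of Lie algebras. For the $S^1$-extensions at hand this is essentially a formality: the Lie algebra of $\widehat{H}$ is visibly $\{(\xi,\widehat{\eta}) \in \X_{\ham}(M,\om)\oplus\widehat{\g} \,;\, L(\mathrm{Pr}\circ K)\xi = p(\widehat{\eta})\}$, with $p \colon \widehat{\g} \to C^\oo(M)$ the projection, and one checks that the bracket, the central $\R$, and the splitting are the evident ones inherited from $\widehat{\g}$. I should also note that the pullback only pins the cocycle down up to a coboundary a priori, but this is harmless, since integrability of a $2$-cocycle depends only on its class $\kappa^*[\psi] = [\kappa^*\psi] \in H^2(\X_{\ham}(M,\om),\R)$, and the Lie algebra of $\widetilde{\Ham}(M,\om)$ is $\X_{\ham}(M,\om)$, the same as that of $\Ham(M,\om)$.
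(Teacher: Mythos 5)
Your proof is correct and follows essentially the same route as the paper: the extension of $\widetilde{\Ham}(M,\om)$ is obtained by pulling back $\widehat{\Quant}(P,\th)_{0}\to\Quant(P,\th)_{0}$ along $\mathrm{Pr}\circ K$, and since $L(\mathrm{Pr}\circ K)=\kappa$, the derived Lie algebra cocycle is $\kappa^*\psi$ up to coboundary, which suffices for integrability of the class. Your additional remarks on the Lie group structure of the pullback and on the cocycle being determined only up to coboundary are accurate and consistent with the paper's setup.
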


\section*{Acknowledgements}
We thank Stefan Haller for valuable comments and remarks, 
which improved the final version of the paper.
C.V.\ was supported by the grant PN-II-ID-PCE-2011-3-0921
of the Romanian National Authority for Scientific Research.
B.J.\ was supported by the NWO grant 
613.001.214 ``Generalised Lie algebra sheaves".



\begin{thebibliography}{MW83}

\bibitem[Ba97]{B}
A.\ Banyaga,
{\it The Structure of Classical Diffeomorphism Groups}, 
Kluwer Academic Publishers, 1997.

\bibitem[BW58]{BW}
W.M.\ Boothby and H.C.\ Wang, 
{\it On contact manifolds}, Ann. of Math. {\bf 68} (1958), 721--734.


\bibitem[Br93]{Br93}
G.E.\ Bredon,
Topology and geometry, 
Graduate Texts in Mathematics, Vol. 139, Springer-Verlag, New York, 1993.

\bibitem[BT82]{BT82}
R.\ Bott and L.W.\ Tu,
Differential forms in algebraic topology, 
Graduate Texts in Mathematics, Vol. 82, Springer-Verlag, New York, 1982.

\bibitem[GV14]{GV14} 
F.\ Gay-Balmaz and C.\ Vizman,
{\it Principal bundles of embeddings and nonlinear Grassmannians},
Ann. Global Anal. Geom. {\bf 46:3} (2014), 293--312.

\bibitem[HV04]{HV04}
S.\ Haller and C.\ Vizman,
{\it Non-linear Grassmannians as coadjoint orbits},
Math.\ Ann.\ {\bf 329} (2004), 771--785.

\bibitem[Is96]{I96}
R.S. Ismagilov,
{\it Representations of infinite-dimensional groups},
Translations of Mathematical Monographs {\bf 152}, 
American Mathematical Society, Providence, RI, 1996.

\bibitem[JN15]{JN15}
B.\ Janssens and K.-H.\ Neeb,
{\it Unitary representations of infinite dimensional Lie groups},
{\tt arxiv:1501.00939}, 2015.

\bibitem[JV15]{JV}
B. Janssens and C. Vizman,
{\it Universal central extension of the Lie algebra of Hamiltonian vector fields},
accepted for IMRN (2015), doi: 10.1093/imrn/rnv301,
{\tt arXiv:1506.00692}, 2015.

\bibitem[Ka90]{Ka90}
V.G. Kac,
{\it Infinite-dimensional Lie algebras},
Third Edition,
Cambridge University Press, Cambridge, 1990.

\bibitem[Ki90]{Ki90}
A.A. Kirillov,
{\it La {g\'eom\'etrie} des moments pour les groupes de
 diff\'eomorphismes},
Operator algebras, unitary representations, enveloping
algebras, and invariant theory (Paris, 1989),
Progr. Math.
{\bf 92} (1990), 73--83,
Birkh\"auser Boston, Boston, MA 


\bibitem[Ko70]{Ko70}
B. Kostant,
{\it Quantization and unitary representations},
Lectures in modern analysis and applications III, 87--208,
Lecture Notes in Math. {\bf 170}, Springer, Berlin, 1970.

\bibitem[KM97]{KM97}
A. Kriegl and P.W. Michor,
{\it The convenient setting of global analysis},
Mathematical Surveys and Monographs {\bf 53},
American Mathematical Society, Providence, RI, 1997.

\bibitem[LP82]{LP82}
L.A.\ 
Lambe and S.B.\ 
Priddy,
{\it Cohomology of nilmanifolds and torsion-free, nilpotent groups},
Trans. Amer. Math. Soc. {\bf 273:1} (1982),
39--55.


\bibitem[Li74]{Li74}
A. Lichnerowicz, 
{\it Alg\`ebre de Lie des automorphismes 
infinit\'esimaux d'une structure unimodulaire},
Ann. Inst. Fourier {\bf 24} (1974), 219--266.

\bibitem[Ma49]{Ma49}
A.\ Malcev,
{\it On a class of homogeneous spaces},
Izvestiya Akademii Nauk SSSR. Ser. Math. {\bf 13} (1949) (Russian);
Amer. Math. Soc. Translation {\bf 42} (1951).

\bibitem[MW83]{MW83}
J. Marsden and A. Weinstein,
{\it Coadjoint orbits, vortices, and Clebsch variables for incompressible
fluids},
Phys. D {\bf 7} (1983), 305--323.

\bibitem[Ne02]{N02}
{K.-H. Neeb},
{\it Central extensions of infinite-dimensional Lie groups},
Ann. Inst. Fourier {\bf 52} (2002), 1365--1442.

\bibitem[NV03]{NV03}
K.-H. Neeb and C. Vizman,
{\it Flux homomorphisms and principal bundles
over infinite dimensional manifolds}, 
Monatsh. Math. {\bf 139} (2003), 309--333.

\bibitem[No54]{No54}
K.\ Nomizu,
{\it On the cohomology of compact homogeneous spaces of nilpotent Lie groups},
Ann.\ Math.\ 
{\bf 59:3} (1954), 531--538.

\bibitem[Om74]{Om74}
   H.\ Omori,
   {\it Infinite dimensional Lie transformation groups},
   Lecture Notes in Mathematics, Vol. {\bf 427},
   Springer-Verlag, Berlin-New York,
   1974

\bibitem[PS86]{PS86}
A.\ Pressley and G.\ Segal, {\it Loop groups}, Oxford University Press, Oxford, 1986.

\bibitem[RS81]{RS}
T. Ratiu and R. Schmid,
{\it The differentiable structure of three remarkable 
diffeomorphism groups},
Math. Z. {\bf 177} (1981), 81--100.

\bibitem[Ro95]{R95}
{C. Roger}, 
{\it Extensions centrales d'algebres et de groupes de Lie de dimension
infinie, alg\`ebre de Virasoro et generalisations},
Rep. Math. Phys., {\bf 35} (1995), 225--266.

\bibitem[Sc78]{Sc78}
R. Schmid,
{\it Die Symplektomorphismen-Gruppe als Fr\'echet-Lie-Gruppe},
Inaugural-Dissertation Universit\"at Z\"urich, Juris Druck+Verlag Z\"urich,
1978.

\bibitem[So70]{S70}
J.-M. Souriau,
{\it Structure des syst\`emes dynamiques}, Dunod, Paris, 1970.

\bibitem[Sp66]{Sp66}
E. Spanier,
{\it Algebraic topology}, McGraw-Hill, 1966.

\bibitem[Th76]{Th76}
W.\ P.\ Thurston,
{\it Some simple examples of symplectic manifolds},
Proc.\ Amer.\ Math.\ Soc.\ {\bf 55:2} (1976), 467--468.

\bibitem[TL99]{TL99}
V.\ Toledano-Laredo, {\it Integrating unitary representations of infinite-dimensional Lie groups},
Journal of Funct.\ Anal., {\bf 161} (1999), 478--508.

\bibitem[Vi10]{Vi} 
C. Vizman, {\it Lichnerowicz cocycles and central Lie group extensions},
Analele Univ. din Timi\c{s}oara, {\bf 48} (2010) 285--297.


\end{thebibliography}
\end{document}